\newtheorem{theorem}{Theorem}
\newtheorem{lemma}{Lemma}
\newtheorem{corollary}{Corollary}
\theoremstyle{definition}
\newtheorem{remark}{Remark}
\definecolor{uugreen}{cmyk}{1,0,0.75,0}
\definecolor{uured}{cmyk}{0.2,1,0.9,0.1}
\definecolor{uublue}  {cmyk}{0.9,0.55,0,0}
\newcommand{\wear}{\ensuremath{{\sf S}^1_2}}
\newcommand{\fpnote}[1]{\marginpar{\footnotesize \textcolor{uured}{#1}}}
 \newcommand{\nrhd}{\mathrel{\not\! \rhd}}
\newcommand{\dijosu}{\sqcup}
\newcommand{\sopre}{{\sf S}}
\newcommand{\woso}{{\mathfrak w}}
\newcommand{\doso}{{\mathfrak d}}
\newcommand{\obj}{\ensuremath{\mathfrak o}}
\newcommand{\tuso}{{\mathfrak t}}
\newcommand{\clso}[1]{{\mathfrak c}_{#1}}
\renewcommand{\iff}{\leftrightarrow}
\renewcommand{\vec}[1]{\vv #1}
\title{Finitely Axiomatized Theories lack Self-Comprehension}
\author{Fedor Pakhomov}
\thanks{Research of Fedor Pakhomov is supported by FWO Senior Postdoctoral Fellowship, project 1283021N.}
 \address{Vakgroep Wiskunde: Analysis, Logic and Discrete Mathematics, 
Ghent University,
Krijgslaan 281,
B9000~~Ghent,
Belgium\newline
and Steklov Mathematical Institute of Russian Academy of Sciences, 
Gubkina 8,
119991 Moscow, Russia}
\email{fedor.pakhomov@ugent.be}
\author{Albert Visser}
\thanks{We thank Lev Beklemishev for his helpful comments and encouragement.}
 \address{Philosophy, Faculty of Humanities,
                Utrecht University,
               Janskerkhof 13,
                3512BL~~Utrecht, The Netherlands}
\email{a.visser@uu.nl}
\date{September 2021}
\begin{document}

\maketitle
    

\begin{abstract}
   In this paper we prove that no consistent finitely axiomatized theory one-dimensionally interprets its own extension with predicative comprehension. This constitutes a result with the flavor of the Second Incompleteness Theorem whose formulation is completely arithmetic-free. Probably the most important novel feature that distinguishes our result from the previous results of this kind is that it is applicable to arbitrary weak theories, rather than to extensions of some base theory.
    
    
    The methods used in the proof of the main result yield a new perspective on the notion of sequential theory, in the setting of forcing-interpretations.
    
\end{abstract}

\section{Introduction}
In this paper we provide an impossibility argument in the niche of the Second Incompleteness Theorem.
We show that no consistent finitely axiomatized theory $T$ can one-dimensionally interpret its own extension, 
${\sf PC}(T)$ that is the second-order extension of $T$ by the predicative comprehension principle
\[
\exists X\,\forall y\,(y\in X\mathrel{\leftrightarrow}\varphi(y)),\]
where $\varphi$ has no second-order quantifiers and $X\not\in\mathsf{FV}(\varphi)$.

Our result is inspired by well-known results about the connection between Predicative Comprehension
and consistency in the case of sequential theories, roughly, theories with sufficient coding machinery.
Two salient results are that Peano Arithmetic, {\sf PA}, does not interpret ${\sf ACA}_0$ and that Zermelo-Fraenkel Set Theory,
{\sf ZF}, does not interpret G\"odel-Bernays Set Theory, {\sf GB}.
One way of proving these results employs
the fact that ${\sf ACA}_0$ interprets $\wear+{\sf Con}({\sf PA})$ and
${\sf GB}$ interprets $\wear+{\sf Con}({\sf ZF})$. 
In fact, the relationship is even tighter, 
${\sf ACA}_0$ is mutually interpretable with $\wear+{\sf Con}({\sf PA})$
and ${\sf GB}$ is mutually interpretable with $\wear+{\sf Con}({\sf ZF})$.
Here \wear\ is Buss's weak arithmetic.
More generally, we have the following result. Suppose $U$ is a sequential theory that
is axiomatized by a scheme $\Theta$. Let ${\sf PC}^{\sf schem}(\Theta)$ be the theory obtained
by taking Predicative Comprehension over the signature of $U$ and adding the universally
quantified version of $\Theta$, where the schematic variables are replaced by class variables.
We have: ${\sf PC}^{\sf schem}(\Theta)$ is mutually interpretable with $\wear + {\sf Con}(\Theta)$.
(See \cite{viss:seco11}, for more information.)
In combination with an appropriate version of the
Second Incompleteness Theorem, we find (\dag) $U$ does not interpret ${\sf PC}^{\sf schem}(\Theta)$.

In our paper we study (\dag) outside of its comfort zone of sequential theories.
We restrict ourselves to finitely axiomatized theories and to one-dimensional interpretability.
However, under these restrictions we prove the result for all theories.
We note that in the finitely axiomatized case, we only need
${\sf PC}(U)$, the result of simply adding Predicative Comprehension to $U$, in stead of the
more fancy ${\sf PC}^{\sf schem}(U)$.

We note that the result that $T$ does not one-dimensionally interpret ${\sf PC}(T)$
shows that $T$ does not interpret $\wear+{\sf Con}(T)$, since $\wear+{\sf Con}(T)$
interprets ${\sf PC}(T)$. The argument for the interpretability of ${\sf PC}(T)$ in
$\wear+{\sf Con}(T)$ is essentially a refinement of the proof of the Completeness Theorem
and does not involve diagonalization. Thus, for a restricted class of cases, our result
implies a version of the Second Incompleteness Theorem.

Our paper provides some spin-offs that hold independent interest.
We present these results in Section~\ref{forcefulsmurf}.

A first result tells us that the extension of a theory $T$ with adjunctive
sets 
is mutually forcing-interpretable with the extension of $T$ with the adjunctive theory of binary relation classes plus the no-universe axiom. The result has the extra feature that the forcing-interpretations back-and-forth preserve the objects and relations of $T$. 
We note that adding adjunctive sets is a form of sequential closure, i.e., a way of making a
theory sequential.

A second result tells us that, if $T$ is finitely axiomatized and one-dimensionally interprets
$T$ on a provably smaller domain, then the extension of $T$ with $n$-ary adjunctive classes,
for sufficiently large $n$, forcing-interprets the extension of $T$ with adjunctive sets.

Thirdly, we show that, if $T$ is finitely axiomatized and one-dimensionally interprets
the extension of $T$ with adjunctive classes, then $T$ forcing-interprets the extension of
$T$ with adjunctive sets.

\subsection*{Genesis of this Work}
The questions leading to the results of this paper come from earlier work by Albert Visser.
The strengthening of the non-interpretability result of ${\sf PC}(T)$ in $T$ for
the sequential, finitely axiomatized case, to the case of pairing theories was discovered some time in a conversation of Albert Visser and Fedor Pakhomov. The basic proof strategy for Theorem~\ref{one_dim_thm} was discovered by Fedor Pakhomov. 

\section{Preliminaries}

All theories that we consider are one-sorted theories with equality and finite relational signature. We assume that the connectives in the first-order language are $\forall,\land$, and $\lnot$. We express all the other connectives using these ones. 

However, we frequently will consider theories that naturally should be considered $n$-sorted theories (with relational signature). In order to do this, we will identify an $n$-sorted theory $T$, whose sorts are $\sigma_1,\ldots,\sigma_n$
with the following one-sorted theory $T^\flat$. The signature of $T^\flat$ contains, in addition to the signature of $T$, unary predicate symbols $\sopre_{\sigma_1},\ldots,\sopre_{\sigma_n}$.
We consider the sorted quantifier $\forall x^{\sigma_i}\varphi$ to be a shorthand for $\forall x\,(\sopre_{\sigma_i}(x)\to\varphi)$. In addition to the explicitly given axioms of $T$, we have the following axioms:
\begin{enumerate}
    \item $\bigvee\limits_{1\le i\le n}\sopre_{\sigma_i}(x)$;
    \item $\lnot\, 
    (\sopre_{\sigma_i}(x)\land \sopre_{\sigma_j}(x))$, for $i<j$;
    \item $\exists x \;\sopre_{\sigma_i}(x)$, for each $i$;
    \item $R(x_1,\ldots,x_m)\to 
    (\sopre_{\sigma_{k_1}}(x_1)
    \land\ldots\land \sopre_{\sigma_{k_m}}(x_m))$, for each original $m$-ary predicate symbol $R$, whose $i$-th argument is of the sort $\sigma_{i}$, for $i<m$. 
    
    Here we treat identity separately: identity of each sort is simply the restriction of identity for the whole domain of $T^\flat$ to
    each of the domains $\sopre_\sigma$.
\end{enumerate}

For theories $T$ and $U$, we denote as $T\dijosu U$ the two-sorted theory that has all predicates of $T$ on the first sort, all predicates of $U$ on the second sort, and whose axioms are all the axioms of $T$ relativized to the first sort and all the axioms of $U$ relativized to the second sort.


We define theory $\mathsf{PC}_{\le n}(T)$ (Predicative Comprehension up to the arity $n$), for any theory $T$. This is 
the $n+1$ sorted theory, whose sorts are \obj\ and $\clso{1},\ldots,\clso{n}$. The predicates of $\mathsf{PC}_{\le n}(T)$ are the predicates of $T$ restricted to the sort \obj\ as well as the predicates $\langle x_1^{\obj},\ldots,x_i^{\obj}\rangle\in X^{(i)}$, for $1\le i\le n$. The axioms of $\mathsf{PC}_{\le n}(T)$ are as follows.
\begin{enumerate}
    \item The axioms of $T$ relativized to the sort \obj.
    \item $\exists X^{\clso{k}}\,
    \forall x_1^{\obj},\ldots, x_k^{\obj}\,
    ( \langle x_1,\ldots,x_k\rangle\in X\mathrel{\leftrightarrow} \varphi(x_1,\ldots,x_k))$, where all quantifiers in $\varphi$ are on the sort $\obj$ and there are no free occurrences of $X$.
    \item
    $\forall X^{\clso{k}}, Y^{\clso{k}}\,(\forall
    x_1^{\obj},\ldots, x_k^{\obj}\; (
    \langle x_1,\ldots, x_k\rangle\in X \leftrightarrow
    \langle x_1,\ldots,x_k\rangle\in Y)\to X=Y)$
\end{enumerate}
The theory $\mathsf{PC}(T)$ is $\mathsf{PC}_{\le 1}(T)$.

In this paper we consider multi-dimensional relative interpretations with parameters and definable equality.

Our main theorem is 
\begin{theorem}\label{one_dim_thm}No consistent finitely axiomatized theory $T$ can 1-dimensionally interpret $\mathsf{PC}(T)$ In other words,
for every consistent finitely axiomatized theory $T$ we have
$T\nrhd_1 {\sf PC}(T)$.
\end{theorem}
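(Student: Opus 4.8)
The plan is to assume, toward a contradiction, that some consistent finitely axiomatized theory $T$ admits a one-dimensional interpretation $K$ witnessing $T\rhd_1{\sf PC}(T)$, and to squeeze out of this a sequential extension of $T$ that proves its own consistency, contradicting the Second Incompleteness Theorem. The first step is to read off from $K$ the weak class-existence content of comprehension. Taking $\varphi(y):=\lnot\,y=y$ and $\varphi(y):=(y\in X\lor y=a)$ in the comprehension scheme produces the empty class and the adjunction $X\cup\{a\}$, while axiom (3) of ${\sf PC}(T)$ is extensionality; hence ${\sf PC}(T)$ contains, in a sublanguage, the extension of $T$ by adjunctive classes, and composing $K$ with the identity interpretation yields $T\rhd_1(T+\text{adjunctive classes})$. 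I would also record the companion observation that, in the one-sorted rendering, the sort axioms force the class sort to be nonempty and disjoint from the object sort, so that $K$ exhibits the object domain as a \emph{provably proper} definable subclass: this is the extra room that the spin-off results convert into coding power.

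Next I would sequentialize $T$ using the spin-off results of Section~\ref{forcefulsmurf}. By the third of those results, since $T$ is finitely axiomatized and one-dimensionally interprets $T+\text{adjunctive classes}$, the theory $T$ forcing-interprets $S:=T+\text{adjunctive sets}$, and the forcing-interpretation preserves the objects and relations of $T$. (The provable properness of the object domain recorded above is precisely the hypothesis feeding the second spin-off result, which underlies the third.) Two consequences matter. First, forcing-interpretations preserve consistency, so from the consistency of $T$ we obtain that $S$ is consistent. Second, $S$ has adjunctive sets and is therefore \emph{sequential}: over $S$ we regain coding of syntax and finite sequences, the Second Incompleteness Theorem, and the sequential-case link between predicative comprehension and consistency.

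The contradiction is then obtained over $S$. Since $S\supseteq T$ and $T\rhd_1{\sf PC}(T)$, we still have $S\rhd_1{\sf PC}(T)$, so $S$ defines an internal model $N\models T$ together with all of its comprehension classes. Working inside the sequential theory $S$, I would run the sequential argument that ${\sf PC}$ over a finitely axiomatized theory is mutually interpretable with $\wear+{\sf Con}$ of that theory: using the coding available in $S$ and the comprehension classes of $N$, one builds a satisfaction class for $N$ and concludes $S\vdash{\sf Con}(T)$. On the other hand, because $T$ forcing-interprets $S$ by an object-preserving forcing-interpretation whose construction is formalizable in the weak sequential setting, $S$ proves ${\sf Con}(T)\to{\sf Con}(S)$. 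Together these give $S\vdash{\sf Con}(S)$, contradicting the Second Incompleteness Theorem for the consistent sequential theory $S$; hence no such $K$ exists and $T\nrhd_1{\sf PC}(T)$.

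The step I expect to be the main obstacle is the construction of the satisfaction class for $N$ inside $S$. The comprehension scheme of ${\sf PC}(T)$ supplies, externally, only one class per standard formula, whereas a genuine internal consistency proof must decide satisfaction uniformly for all internally coded formulas, including nonstandard ones; closing this gap is exactly where the sequentiality of $S$ and the object-preserving character of the forcing-interpretation must be exploited in concert, and it is the technical role played by the spin-off machinery. A secondary point requiring care is the formalized passage from ${\sf Con}(T)$ to ${\sf Con}(S)$, i.e.\ checking that the object-preserving forcing-interpretation of $S$ in $T$ is simple enough to be carried out provably in $S$.
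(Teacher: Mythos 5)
Your first half is sound and in fact parallels the paper: from $T\rhd_1\mathsf{PC}(T)$ you correctly extract $T\rhd_1\mathsf{AC}(T)$ (empty class and adjunction are comprehension instances with class parameters) and the provable properness of the object domain, i.e.\ $T\rhd_1 T\dijosu\forall x\,(x=x)$; via the Section~\ref{forcefulsmurf} spin-offs (whose proofs are the paper's Lemma~\ref{forcing_AS} machinery in another guise) you then get a $T$-preserving forcing-interpretation of $\mathsf{AS}(T)$ in $T$, hence the consistency of $S=\mathsf{AS}(T)$. The genuine gap is in your endgame, at exactly the step you flag and postpone: the claim that $S\rhd_1\mathsf{PC}(T)$ together with sequentiality of $S$ yields $S\vdash\mathsf{Con}(T)$. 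The ``sequential argument'' you invoke (Theorem~\ref{PC_and_Con}) requires the theory \emph{under} the comprehension operator to be sequential: its proof builds a satisfaction class as a class of coded formula--assignment pairs, so the objects that the classes range over must themselves code syntax. In your situation comprehension is over the arbitrary theory $T$: the classes of the internal model $N$ contain only ($k$-tuples of) $N$-objects, and no such class can serve as a satisfaction relation for $S$-coded (possibly nonstandard) formulas; the sequential coding of $S$ and the classes of $N$ live on different sides of the interpretation and no instance of the scheme marries them. Nor does sequentiality of the ambient theory help on its own: a sequential theory with a definable internal model of a finitely axiomatized theory need not prove that theory's consistency --- $\wear$ interprets itself identically, yet $\wear\nrhd \wear+\mathsf{Con}(\wear)$ by Theorem~\ref{G2}. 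So the implication on which your contradiction rests is unsupported, and the ``object-preserving character of the forcing-interpretation'' cannot close it.

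What the paper does instead, and what is missing from your proposal, is to apply comprehension \emph{on top of} the forcing rather than underneath it: Lemma~\ref{PC_inside_KM} interprets $\mathsf{KM}(\mathsf{PC}(U))$ in $\mathsf{PC}_{\le 2}(\mathsf{KM}(U))$ by taking classes in the Kripke model to be names, and Corollary~\ref{PC_inside_forcing} thereby converts the forcing-interpretation of $\mathsf{AS}(T)$ in $\mathsf{PC}_{\le n}(T)$ into a forcing-interpretation of $\mathsf{PC}(\mathsf{AS}(T))$ --- comprehension over the now-sequential theory --- in $\mathsf{PC}_{\le m}(\mathsf{PC}_{\le n}(T))$, hence in $T$ itself, using $T\rhd_1\mathsf{PC}^k(T)$ and the tuple lemmas of Section~\ref{PC_and_tuples}. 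Then Theorem~\ref{PC_and_Con} applies with its hypothesis genuinely satisfied, and the G\"odel contradiction is run for $\mathsf{PC}(\mathsf{AS}(T))$, which interprets $\wear+\mathsf{Con}(T)$ and (being finitely axiomatizable by Lemma~\ref{vrolijkesmurf} and forcing-interpreted in $T$, so that $\wear\vdash\mathsf{Con}(T)\to\mathsf{Con}(\mathsf{PC}(\mathsf{AS}(T)))$) also $\wear+\mathsf{Con}(\mathsf{PC}(\mathsf{AS}(T)))$, contradicting Theorem~\ref{G2} --- not for $\mathsf{AS}(T)$, which is where your version tries and fails to place it.
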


\section{Predicative comprehension and tuples}
\label{PC_and_tuples}

We write $T\rhd_m U$ if $T$ interprets $U$ by an $m$-dimensional interpretation.
We have the following trivial lemma:
\begin{lemma}\label{PC_funct}
If $T\rhd_1 U$, then $\mathsf{PC}(T)\rhd_1\mathsf{PC}(U)$.
\end{lemma}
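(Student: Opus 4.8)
The plan is to lift a given one-dimensional interpretation $K \colon T \rhd_1 U$ to an interpretation $M \colon \mathsf{PC}(T) \rhd_1 \mathsf{PC}(U)$, keeping the object part supplied by $K$ and reading the classes of $U$ off the classes that $\mathsf{PC}(T)$ already provides. Write $\delta_K$ for the domain formula of $K$, $=_K$ for its definable equality, $R^K$ for the translation of each predicate $R$ of $U$, and $\vec p$ for its parameters. On the object sort $\obj$ of $\mathsf{PC}(T)$ — where the full theory $T$ is available — I interpret the objects of $U$ exactly as $K$ prescribes: domain $\delta_K$, equality $=_K$, predicates $R^K$. Since $\mathsf{PC}(T)$ proves the axioms of $T$ relativized to $\obj$, this part verifies all purely object-level axioms of $\mathsf{PC}(U)$, namely the axioms of $U$.

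For the class sort, I interpret a class of $U$ by a class $X$ of the sort $\clso{1}$ of $\mathsf{PC}(T)$ subject to two definable side conditions: that $X$ lives inside the $U$-domain, $\forall x\,(x\in X\to \delta_K(x))$, and that $X$ respects the definable equality, $\forall x,x'\,(\delta_K(x)\land \delta_K(x')\land x=_K x'\to(x\in X\leftrightarrow x'\in X))$. Membership of $\mathsf{PC}(U)$ is interpreted by the membership of $\mathsf{PC}(T)$, and equality of interpreted classes is simply equality on the sort $\clso{1}$. The interpretation is one-dimensional on both sorts, so $M$ is one-dimensional, and it inherits the parameters $\vec p$ of $K$, which is harmless since we work with parametric interpretations.

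It remains to check comprehension and extensionality of $\mathsf{PC}(U)$. For comprehension, let $\varphi(x)$ be a formula of $\mathsf{PC}(U)$ all of whose quantifiers range over $\obj$; its $K$-translation $\varphi^K(x)$ is then a pure $T$-formula with all quantifiers relativized to $\delta_K$, hence an admissible matrix for the comprehension scheme of $\mathsf{PC}(T)$. Applying that scheme to $\delta_K(x)\land\varphi^K(x)$ yields a class $X$ of sort $\clso{1}$ satisfying the first side condition by construction; it satisfies the second because both $\delta_K$ and $\varphi^K$ are congruent for $=_K$, this congruence being part of the data making $K$ a legitimate interpretation. Thus $X$ is an interpreted $U$-class with exactly the right members among $\delta_K$-objects, giving the required instance of comprehension. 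Extensionality is then immediate: two interpreted classes $X,Y$ with the same $U$-members agree on all $\delta_K$-objects, and since both are included in $\delta_K$ they agree everywhere, so $X=Y$ already in $\mathsf{PC}(T)$.

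The one genuinely non-formal point — the step I would treat most carefully — is the interaction with the definable equality $=_K$: without the restriction to $=_K$-closed subclasses of $\delta_K$, membership would not be well defined on the $=_K$-equivalence classes that constitute the objects of $U$, and extensionality of $\mathsf{PC}(U)$ would fail. Verifying that the translation of every $U$-formula, and the domain $\delta_K$ itself, is a $=_K$-congruence is precisely what renders the side condition harmless for comprehension and what identifies class-equality with $\clso{1}$-equality. Everything else is the routine bookkeeping that the paper signals by calling the lemma trivial.
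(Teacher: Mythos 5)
Your proof is correct and is exactly the routine construction the paper has in mind when it declares the lemma trivial and omits the argument: keep $K$ on the object sort, interpret $\clso{1}$-classes of $U$ as $=_K$-closed subclasses of $\delta_K$, read membership off membership, and verify comprehension and extensionality by translating the matrix and applying the scheme of $\mathsf{PC}(T)$. The one subtle point, compatibility with the definable equality $=_K$, is precisely the one you isolate and handle correctly.
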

And its multi-dimensional generalization:
\begin{lemma}\label{PC_funct_mult}
If $T\rhd_n U$, then $\mathsf{PC}_{\le nm}(T)\rhd_n \mathsf{PC}_{\le m}(U)$.
\end{lemma}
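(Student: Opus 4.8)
The plan is to build the desired $n$-dimensional interpretation $J$ of $\mathsf{PC}_{\le m}(U)$ in $\mathsf{PC}_{\le nm}(T)$ directly from a given $n$-dimensional interpretation $K\colon T\rhd_n U$, by reusing $K$ on the object sort and representing each $U$-class as a $T$-class of $n$ times the arity. First I would recall that $K$ supplies an $n$-ary domain formula $\delta_K(\vec x)$, a definable equality $=_K$, and translations of the relation symbols of $U$, all in the language of $T$; and that $\mathsf{PC}_{\le nm}(T)$ extends $T$ on the sort $\obj$ by classes of every arity up to $nm$. On the sort $\obj$ of $\mathsf{PC}_{\le m}(U)$ the interpretation $J$ simply copies $K$, so a $U$-object is an $n$-tuple $\vec x$ of $T$-objects with $\delta_K(\vec x)$, equality is $=_K$, and the relations of $U$ are translated as under $K$.

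The crucial idea is the treatment of the sort $\clso k$ for $1\le k\le m$. Since a $k$-ary class of $U$-objects becomes, after unfolding each $U$-object into its $n$ coordinates, a class of $kn$-tuples of $T$-objects, and since $kn\le nm$, I interpret $\clso k$ by the sort $\clso{kn}$ of $\mathsf{PC}_{\le nm}(T)$; this is exactly the point at which the bound $nm$ is consumed. The membership atom $\langle y_1,\ldots,y_k\rangle\in X$ is translated by concatenating the $n$-tuples $\vec x_1,\ldots,\vec x_k$ representing $y_1,\ldots,y_k$ into a single $kn$-tuple and asking for its membership in the $\clso{kn}$-object $\hat X$ representing $X$. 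To keep the dimension uniformly equal to $n$, I pad each one-coordinate class representative with dummy coordinates, using the sort predicates of the $\flat$-encoding to tell objects and the various class sorts apart.

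The equality on the interpreted class sorts is the key design choice: I take two $\clso{kn}$-representatives to be $J$-equal precisely when they agree on all arguments drawn from $\delta_K$ modulo $=_K$; that is, I use the definable equality permitted by our notion of interpretation to force extensional equality over $U$-objects. With this convention the extensionality axiom of $\mathsf{PC}_{\le m}(U)$ holds by fiat, and the axioms of $U$ relativized to $\obj$ hold because $K$ is an interpretation. The remaining obligation is comprehension: given a comprehension formula $\varphi(y_1,\ldots,y_k)$ of $\mathsf{PC}_{\le m}(U)$ whose quantifiers range only over $\obj$, its $J$-translation $\varphi^J$ has only $T$-object quantifiers (because $K$ sends object quantifiers to object quantifiers) together with possibly free $T$-class parameters inherited from the free class parameters of $\varphi$. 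It is therefore a legitimate predicative comprehension formula of $\mathsf{PC}_{\le nm}(T)$ over $kn$ free object variables, and applying comprehension at arity $kn$ yields a $\clso{kn}$-object collecting exactly those $kn$-tuples $\langle \vec x_1,\ldots,\vec x_k\rangle$ satisfying $\bigwedge_{j\le k}\delta_K(\vec x_j)$ and $\varphi^J$, which is the required witness.

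I expect the main obstacle to be the bookkeeping around equality and congruence rather than any conceptual difficulty: one must check that $\varphi^J$ is provably invariant under $=_K$ in each argument block, so that the witnessing $\clso{kn}$-object genuinely represents a well-defined $U$-class and the membership translation is compatible with the interpreted equality on $\obj$. This invariance follows from $K$ being an interpretation, whence atomic formulas, and hence all formulas, respect $=_K$; but verifying it uniformly across all arities $k\le m$, together with the routine check that padding leaves the axioms undisturbed, is where the care is needed. Everything else is a direct transcription of $K$, carried out one arity at a time, and specializing to $n=m=1$ recovers Lemma~\ref{PC_funct}.
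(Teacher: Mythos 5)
Your overall architecture is the one the paper has in mind: the paper states Lemma~\ref{PC_funct_mult} without proof, as the evident multi-dimensional analogue of Lemma~\ref{PC_funct}, and your construction (copy $K$ on the $\obj$-sort, send the sort $\clso k$ to $\clso{kn}$, translate membership by concatenating the $n$-blocks, take extensional definable equality on class representatives, pad to keep the dimension uniformly $n$, and verify comprehension by observing that $\varphi^J$ is a predicative formula of arity $kn\le nm$) is exactly the natural way to fill that in; the comprehension step in particular is handled correctly.

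There is, however, one genuine gap, located precisely at the point you dismiss as bookkeeping: the congruence of your membership translation with respect to $=_K$ does \emph{not} ``follow from $K$ being an interpretation.'' What follows from $K$ being an interpretation is that the translations of the predicates \emph{of $U$} respect $=_K$; the membership predicate of $\mathsf{PC}_{\le m}(U)$ is a new symbol whose translation you chose yourself, and with your choices it is not a congruence. Concretely, suppose $=_K$ is a nontrivial definable equality (the paper explicitly allows this), so that some model has $n$-tuples $\vec a,\vec b$ in $\delta_K$ with $\vec a=_K\vec b$ but $\vec a\ne\vec b$; by comprehension in $\mathsf{PC}_{\le nm}(T)$ form the $\clso n$-class $\hat X=\{\vec x:\vec x=\vec a\}$. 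This $\hat X$ lies in your (unrestricted) domain for $\clso 1$, yet $\vec a\in\hat X$ and $\vec b\notin\hat X$, so the translation of the logically valid congruence axiom $y=y'\to(y\in X\to y'\in X)$ fails in that model and hence is unprovable. (On the more charitable reading of your ``agree modulo $=_K$'' equality, the same classes break reflexivity of $=_J$ on your declared domain, which is equally fatal.) The repair is standard but is a needed idea, not bookkeeping: either restrict the domain for $\clso k$ to those $\clso{kn}$-classes that are blockwise $=_K$-invariant --- comprehension witnesses automatically are, once the class parameters are, since then $\varphi^J$ is provably $=_K$-invariant by induction --- or build the closure into the translation of $\in$, reading $\langle y_1,\ldots,y_k\rangle\in X$ as ``some blockwise $=_K$-equivalent of the concatenated tuple belongs to $\hat X$.'' With either repair, the rest of your verification goes through as you describe.
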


It is sometimes pleasant to treat dimension using an auxiliary theory
that adds $i$-tuples for $2\leq i \leq n$ to the given base theory.
Let $\mathsf{Tuple}_{\le n}(T)$ be the following $n$-sorted theory. The sorts of $\mathsf{Tuple}_{\le n}(T)$ are $\tuso_1,\ldots,\tuso_n$. Here
$\tuso_1$ may be identified with $\obj$, the sort of basic
objects. The signature of $\mathsf{Tuple}_{\le n}$ consists of all the predicates of $T$ on the sort $\tuso_1$ and the predicates $\mathsf{Tp}_i(p^{\tuso_i},x_1^{\tuso_1},\ldots,x_i^{\tuso_1})$, for all $2\le i\le n$. The axioms of $\mathsf{Tuple}_{\le n}(T)$ are
\begin{enumerate}
    \item all the axioms of $T$ relativized to $\tuso_1$;
    \item $\forall p^{\tuso_i},q^{\tuso_i},
    x_1^{\tuso_1},\ldots,x_i^{\tuso_1},
    y_1^{\tuso_1},\ldots,y_i^{\tuso_1}\;\\
    \hspace*{1cm}
    ((\mathsf{Tp}_i(p,x_1,\ldots,x_i)\land \mathsf{Tp}_i(q,y_1,\ldots,y_i))\;\to$ \\
    \hspace*{3cm} $(p=q \iff (x_1=y_1\land\ldots\land x_i=y_i)))$, \\
    for $2\le i\le n$;
    \item $\forall p^{\tuso_i}\exists x_1^{\tuso_1},\ldots,x_i^{\tuso_1}
    \;\mathsf{Tp}_i(p,x_1,\ldots,x_i)$, for $2\le i\le n$;
    \item $\forall x_1^{\tuso_1},\ldots,x_i^{\tuso_1}
    \exists p^{\tuso_i}\;\mathsf{Tp}_i(p,x_1,\ldots,x_i)$, for $2\le i\le n$.
\end{enumerate}

\begin{lemma}\label{Pairs_from_PC^2}
 $\mathsf{PC}^2(T)\rhd_1 \mathsf{Tuple}_{\le 2}(T)$.
\end{lemma}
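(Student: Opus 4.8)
The plan is to realise the tuple sort by Kuratowski pairing. Reading $\mathsf{PC}^2(T)$ as $\mathsf{PC}(\mathsf{PC}(T))$, it is convenient to think in three levels: level $0$, the objects of $T$ (the sort $\obj$); level $1$, classes of level-$0$ objects; and level $2$, classes of (level-$0$-and-level-$1$) objects. I would code the pair of two objects $a,b$ by the level-$2$ object $\langle a,b\rangle=\{\{a\},\{a,b\}\}$: the inner singleton and doubleton sit at level $1$ and the outer brace at level $2$. This is exactly why one layer of comprehension does not suffice and two are needed. Write $\in_1$ for the membership of $\mathsf{PC}(T)$ (level-$0$ object in level-$1$ class) and $\in_2$ for the membership added by the outer $\mathsf{PC}$; both are predicate symbols of $\mathsf{PC}^2(T)$, as are the level-$1$ and level-$2$ sort predicates and the predicates of $T$ on level $0$.

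First I would fix some abbreviations, all of whose quantifiers range over the object sort of the outer $\mathsf{PC}$ (i.e.\ over levels $0$ and $1$): ``$Z=\{a\}$'' for ``$Z$ is at level $1$ and $\forall w^{\obj}(w\in_1 Z\iff w=a)$'', similarly ``$Z=\{a,b\}$'', and $\mathsf{codesPair}(p,a,b)$ for ``$p$ is at level $2$ and $\forall Z\,(Z\in_2 p\iff(Z=\{a\}\vee Z=\{a,b\}))$''. The interpretation then takes as its domain the level-$0$ objects together with those $p$ satisfying $\exists a^{\obj}\exists b^{\obj}\,\mathsf{codesPair}(p,a,b)$; it interprets $\sopre_{\tuso_1}$ as ``level-$0$ object'', $\sopre_{\tuso_2}$ as ``codes a pair'', the predicates of $T$ by their level-$0$ counterparts, equality by equality, and $\mathsf{Tp}_2(p,x_1,x_2)$ by $\mathsf{codesPair}(p,x_1,x_2)$. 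Each interpreted element is a single element of $\mathsf{PC}^2(T)$, so the interpretation is $1$-dimensional.

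Next I would verify the axioms of $\mathsf{Tuple}_{\le 2}(T)$, together with the flattening axioms of its two-sorted encoding. That $T$ holds on $\tuso_1$ is immediate, since $\mathsf{PC}^2(T)$ carries $T$ on its level-$0$ objects; disjointness and non-emptiness of the two sort predicates are routine. Totality follows from comprehension: $\{a\}$ and $\{a,b\}$ exist by level-$1$ comprehension, and the level-$2$ class collecting exactly these two exists by level-$2$ comprehension applied to the matrix $Z=\{a\}\vee Z=\{a,b\}$, whose quantifiers are all on the object sort as required. Surjectivity is built into the choice of domain. For the defining axiom of tuples, $p=q\iff(x_1=y_1\wedge x_2=y_2)$ on pair-codes, the right-to-left direction is a short application of extensionality at levels $1$ and $2$.

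The hard part will be the left-to-right direction, namely the Kuratowski uniqueness lemma carried out inside $\mathsf{PC}^2(T)$: from $\{\{a\},\{a,b\}\}=\{\{c\},\{c,d\}\}$ one must derive $a=c$ and $b=d$. I would run the usual case split on whether $a=b$, using level-$2$ extensionality to pass from equality of the outer classes to equivalence of their memberships and level-$1$ extensionality to turn equalities of singletons and doubletons into equalities of objects, while checking that each auxiliary class invoked arises from a comprehension instance with only object-sort quantifiers. This argument is purely logical and uses nothing about $T$ beyond equality, so it applies uniformly to every $T$.
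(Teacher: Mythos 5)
Your proposal is correct and takes essentially the same route as the paper: the paper's proof also views $\mathsf{PC}^2(T)$ as three-sorted (elements, classes of elements, classes of classes), represents elements by themselves, and codes pairs as Kuratowski-style pairs $\{\{a\},\{a,b\}\}$ in the classes-of-classes sort. The only difference is that the paper declares the verification of the $\mathsf{Tuple}_{\le 2}(T)$ axioms routine, whereas you spell out the comprehension instances, the use of extensionality at both class levels, and the Kuratowski uniqueness argument.
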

\begin{proof}
Theory $\mathsf{PC}^2(T)$ is a theory that may be considered
to be $3$-sorted: we have the sort of elements (on which we have $T$), the sort of classes of elements, and the sort of classes that could contain either elements or other classes of elements.
We represent pairs $\langle a,b\rangle$ by Kuratowski-style pairs $\{\{a\},\{a,b\}\}$ (in the domain of classes of classes) 
and we represent elements by themselves. 
The verification of all axioms of $\mathsf{Tuple}_{\le 2}(T)$ is routine.
\end{proof}

Trivially we have:
\begin{lemma}\label{PC_to_tuple}
$\mathsf{PC}(\mathsf{Tuple}_{\le n}(T))\rhd_1 \mathsf{PC}_{\le n}(T)$.
\end{lemma}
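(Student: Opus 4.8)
The plan is to read a $k$-ary class over the objects of $T$ as an ordinary unary class of $k$-tuples, exploiting the tupling predicates of $\mathsf{Tuple}_{\le n}(T)$. I would set up a $1$-dimensional interpretation of $\mathsf{PC}_{\le n}(T)$ in $\mathsf{PC}(\mathsf{Tuple}_{\le n}(T))$ as follows. The object sort $\obj$ is interpreted by the sort $\tuso_1$ of basic objects, and the predicates of $T$ are read off directly, which is sound because $\mathsf{Tuple}_{\le n}(T)$ carries $T$ on $\tuso_1$ by its axiom (1). A $k$-ary class is interpreted by a unary class $\hat X$ of the ambient theory that is concentrated on the sort $\tuso_k$; the key point is that the comprehension of $\mathsf{PC}(\mathsf{Tuple}_{\le n}(T))$ is taken over the whole (combined) domain of $\mathsf{Tuple}_{\le n}(T)$, so a single unary class may collect elements of sort $\tuso_k$, and via $\mathsf{Tp}_k$ such a collection is nothing but a $k$-ary relation on basic objects. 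The intended decoding is $\langle x_1,\dots,x_k\rangle\in X \;:\equiv\; \exists p\,(\mathsf{Tp}_k(p,x_1,\dots,x_k)\land p\in\hat X)$.

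Under this reading the two substantive axiom groups of $\mathsf{PC}_{\le n}(T)$ transfer almost mechanically, and I expect both to be routine. For comprehension, given an object formula $\varphi(x_1,\dots,x_k)$ --- with any class parameters translated through the same decoding --- I would apply comprehension in the ambient $\mathsf{PC}$ to the predicate asserting that $p$ is of sort $\tuso_k$ and that its unique $\mathsf{Tp}_k$-decoding satisfies $\varphi$. This predicate has only first-order quantifiers (over the combined domain, relativised to $\tuso_1$) and no class quantifiers, so it is admissible; axioms (2)--(4) of $\mathsf{Tuple}_{\le n}(T)$, which together say that $\mathsf{Tp}_k$ is a bijection between the $k$-tuples over $\tuso_1$ and the sort $\tuso_k$, guarantee that the resulting class decodes to exactly the intended relation. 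For extensionality, two interpreted $k$-ary classes with the same decoded members have literally the same $\tuso_k$-elements, again by this bijectivity, hence are equal by the extensionality axiom of the ambient $\mathsf{PC}$.

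The only genuinely delicate point --- and, given that the statement is advertised as trivial, really just bookkeeping --- is verifying the structural axioms of the many-sorted presentation, in particular the disjointness of the sorts $\clso{1},\dots,\clso{n}$. The naive reading ``$\hat X$ is a subset of $\tuso_k$'' fails here, since the empty class is a subset of every $\tuso_k$ and would populate all class-sorts at once. I would remove this by using parameters (which are permitted) to tag each arity. A clean uniform choice: for $k<n$, represent a $k$-ary relation inside $\tuso_{k+1}$ by padding each tuple with a repeated last coordinate and adjoining one fixed non-padded parameter $b_k$ of sort $\tuso_{k+1}$ as an arity-marker (invisible to the padded decoding), while representing an $n$-ary relation as a subset of $\tuso_n$ together with a marker $b_n$ of sort $\tuso_1$. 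Since the arities $1,\dots,n-1$ then live in the pairwise distinct sorts $\tuso_2,\dots,\tuso_n$ and carry no $\tuso_1$-element, whereas every arity-$n$ representative does carry a $\tuso_1$-element, all the class-sorts are forced disjoint; nonemptiness of each sort and the sort-respecting shape of the atomic predicates are then immediate, with the degenerate one-element object domain handled separately as a triviality.
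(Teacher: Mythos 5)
Your proposal is correct and is essentially the argument the paper intends: the paper states this lemma without proof (``Trivially we have''), the implicit interpretation being exactly yours --- read a unary class concentrated on the sort $\tuso_k$ as a $k$-ary relation on $\tuso_1$ via the bijection $\mathsf{Tp}_k$, and push comprehension and extensionality through this decoding. The sort-disjointness point you isolate (the empty class would otherwise inhabit every class sort at once) is a genuine technicality of the paper's one-sorted rendering of many-sorted theories that the authors silently elide, and your padding-plus-marker tagging, with the one-element domain treated separately, discharges it correctly.
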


\begin{lemma}\label{Tuple_mult}$\mathsf{Tuple}_{\le n}(\mathsf{Tuple_{\le m}}(T))\rhd_1\mathsf{Tuple}_{\le nm}(T)$.\end{lemma}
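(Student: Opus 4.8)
The plan is to realize a $T$-tuple of length $i\le nm$ as a single nested tuple: cut the $i$ entries into consecutive blocks of length $m$, code each block by an inner $\mathsf{Tuple}_{\le m}(T)$-tuple, and then code the resulting sequence of blocks by an outer $\mathsf{Tuple}_{\le n}$-tuple. Since $\lceil i/m\rceil\le n$, the number of blocks never exceeds $n$, so the outer tuple sort is always available. Writing $i=(k-1)m+r$ with $k=\lceil i/m\rceil$ and $1\le r\le m$, the first $k-1$ blocks are full (inner arity $m$) and the final block has arity $r$. The interpretation is $1$-dimensional: the target sort $\tuso_1$ is taken to be the inner sort $\tuso_1$ of base objects, on which $T$ holds because the outer construction relativizes all axioms of $\mathsf{Tuple}_{\le m}(T)$ to its base sort and the latter in turn relativizes $T$ to its own $\tuso_1$; for $2\le i\le m$ the target sort $\tuso_i$ is the inner sort $\tuso_i$; and for $m<i\le nm$ the target sort $\tuso_i$ is carved out of the outer sort of $k$-tuples as those whose first $k-1$ components lie in the inner sort $\tuso_m$ and whose last component lies in the inner sort $\tuso_r$.

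First I would check that these definable domains are pairwise disjoint, so that the native equality of the source serves as equality of the interpretation and no quotient is needed. Disjointness between the cases $i\le m$ and $i>m$, and across distinct outer arities $k$, is immediate from the sort-disjointness axioms of the $\flat$-translations; for two target arities sharing the same $k$ but with different remainders $r\ne r'$, disjointness follows because the last component is forced into the disjoint inner sorts $\tuso_r$ and $\tuso_{r'}$. In particular the pair $(k,r)$, and hence $i$ itself, is recoverable from the representing element, which is exactly what makes the arity-indexed sorts well defined.

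Next I would write down the coding predicate. For $m<i\le nm$ with $i=(k-1)m+r$, I define $\mathsf{Tp}_i(p,x_1,\dots,x_i)$ to assert the existence of block codes $p_1,\dots,p_k$ such that $p$ is the outer $k$-tuple $\langle p_1,\dots,p_k\rangle$, each $p_\ell$ with $\ell<k$ is the inner $m$-tuple coding $(x_{(\ell-1)m+1},\dots,x_{\ell m})$, and $p_k$ is the inner $r$-tuple coding the remaining $r$ entries; for $i\le m$ the predicate is simply the inner $\mathsf{Tp}_i$ (with $\mathsf{Tp}_1$ read as identity on base objects). The four axioms of $\mathsf{Tuple}_{\le nm}(T)$ then reduce to the corresponding inner and outer axioms: totality of decoding (axiom 3) and of coding (axiom 4) come from composing the inner and outer instances of axioms 3 and 4 block by block, and the injectivity biconditional (axiom 2) is obtained by chaining inner injectivity within each block with outer injectivity across blocks.

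The part requiring the most care is purely the bookkeeping around the ragged final block: one must treat uniformly the cases $k=1$ (no outer tuple at all), $r=m$ (all blocks full, $i=km$), and $r=1$ (a final block that is a bare base object rather than a genuine inner tuple, since inner $\mathsf{Tp}_j$ is declared only for $2\le j\le m$). Once the sort predicates are arranged so that $(k,r)$ is read off correctly in each of these boundary cases, the verification of the axioms is entirely routine, and I expect no genuine difficulty beyond this indexing.
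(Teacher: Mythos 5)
Your proposal is correct and takes essentially the same approach as the paper: cut a $T$-tuple of length at most $nm$ into consecutive blocks coded by inner $\mathsf{Tuple}_{\le m}(T)$-tuples, and combine the at most $n$ blocks (the last one possibly ragged) into a single outer tuple, interpreting $T$ and its domain identically. Your explicit treatment of domain disjointness and of the boundary cases ($k=1$, $r=1$, $r=m$) only spells out bookkeeping that the paper's terse proof leaves implicit.
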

\begin{proof} In $\mathsf{Tuple}_{\le n}(\mathsf{Tuple_{\le m}}(T))$ we have $T$-domain, tuples of the elements of $T$-domain $\langle a_1,\ldots,a_k\rangle_1$, where we have $1\le k\le m$ and the tuples $\langle s_1,\ldots, s_r\rangle_2$, where $1\le r\le n$ and $s_i$'s are either element of $T$-domain or tuples $\langle a_1,\ldots,a_k\rangle_1$. Our interpretation preserves $T$-domain and all $T$ predicates. We represent a tuple $\langle a_1,\ldots,a_k\rangle$, $1\le k\le n$ as follows. We find unique $0\le r<m$ and $1\le l\le n$ such that $k=rn+l$ and put our representation to be $\langle s_1,\ldots,s_{r+1}\rangle_2$, where for $1\le i\le r$ we put $s_i=\langle a_{(i-1)r+1},\ldots,a_{(i-1)r+n}\rangle_2$ and we put $s_{r+1}=\langle a_{rn+1},\ldots,a_{rn+l}\rangle_1$.\end{proof}
From Lemmas \ref{PC_funct}, \ref{Pairs_from_PC^2}, and \ref{Tuple_mult} we get 
\begin{lemma}\label{PC_Tuple_n} $\mathsf{PC}^{2n}(T)\rhd_1 \mathsf{Tuple}_{\le 2^n}(T)$.\end{lemma}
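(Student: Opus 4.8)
The plan is to chain together the three lemmas cited in the statement, using an induction on $n$. The notation $\mathsf{PC}^{2n}(T)$ here means iterating the $\mathsf{PC}$ functor $2n$ times, i.e. $\mathsf{PC}^{k+1}(T) = \mathsf{PC}(\mathsf{PC}^k(T))$, and Lemma~\ref{PC_funct} gives us functoriality: from $T\rhd_1 U$ we get $\mathsf{PC}(T)\rhd_1\mathsf{PC}(U)$. The base case $n=1$ is exactly Lemma~\ref{Pairs_from_PC^2}, which states $\mathsf{PC}^2(T)\rhd_1\mathsf{Tuple}_{\le 2}(T)$, matching the claim for $n=1$ since $2^1=2$.

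For the inductive step, assume $\mathsf{PC}^{2n}(T)\rhd_1\mathsf{Tuple}_{\le 2^n}(T)$. First I would apply the functoriality Lemma~\ref{PC_funct} twice to this hypothesis to obtain $\mathsf{PC}^{2}(\mathsf{PC}^{2n}(T))\rhd_1\mathsf{PC}^2(\mathsf{Tuple}_{\le 2^n}(T))$, that is, $\mathsf{PC}^{2n+2}(T)\rhd_1\mathsf{PC}^2(\mathsf{Tuple}_{\le 2^n}(T))$. Next, applying the base-case Lemma~\ref{Pairs_from_PC^2} with $\mathsf{Tuple}_{\le 2^n}(T)$ in place of $T$ yields $\mathsf{PC}^2(\mathsf{Tuple}_{\le 2^n}(T))\rhd_1\mathsf{Tuple}_{\le 2}(\mathsf{Tuple}_{\le 2^n}(T))$. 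Finally, Lemma~\ref{Tuple_mult} (with $m=2^n$ and the outer index $2$) gives $\mathsf{Tuple}_{\le 2}(\mathsf{Tuple}_{\le 2^n}(T))\rhd_1\mathsf{Tuple}_{\le 2\cdot 2^n}(T)=\mathsf{Tuple}_{\le 2^{n+1}}(T)$. Composing these three one-dimensional interpretations (using transitivity of $\rhd_1$, which is immediate for one-dimensional interpretations) closes the induction.

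I expect no serious obstacle here, since every arrow is supplied by a prior lemma and the only content is arranging the indices correctly. The one point demanding care is the bookkeeping: I must make sure I apply $\mathsf{PC}$ exactly twice in the inductive step so that the exponent advances from $2n$ to $2n+2 = 2(n+1)$, and that the tuple-arity multiplication in Lemma~\ref{Tuple_mult} produces $2^{n+1}$ rather than, say, $2^n+2$. A secondary point worth a sentence is that composition of one-dimensional interpretations is again one-dimensional, so the dimension stays at $1$ throughout the chain; this is what licenses writing each link as $\rhd_1$ and concluding the composite is $\rhd_1$ as well.
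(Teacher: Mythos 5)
Your proof is correct and is exactly the argument the paper intends: the paper derives this lemma by citing Lemmas~\ref{PC_funct}, \ref{Pairs_from_PC^2}, and \ref{Tuple_mult} without spelling out the induction, and your write-up simply makes that induction explicit, with the indices handled correctly. Nothing to change.
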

Combining Lemmas \ref{PC_funct}, \ref{PC_to_tuple}, and \ref{PC_Tuple_n} we get
\begin{lemma}\label{PC_to_PC_le_n} $\mathsf{PC}^{2n+1}(T)\rhd_1 \mathsf{PC}_{\le 2^n}(T)$.\end{lemma}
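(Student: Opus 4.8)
The plan is to chain together the three lemmas cited immediately before the statement, so the proof is essentially a composition of interpretations with some arithmetic bookkeeping on the dimension and arity indices. First I would set up the chain by applying Lemma~\ref{PC_funct} to the interpretation supplied by Lemma~\ref{PC_Tuple_n}. Since $\mathsf{PC}^{2n}(T)\rhd_1\mathsf{Tuple}_{\le 2^n}(T)$, applying the functor $\mathsf{PC}(-)$ via Lemma~\ref{PC_funct} yields $\mathsf{PC}(\mathsf{PC}^{2n}(T))\rhd_1\mathsf{PC}(\mathsf{Tuple}_{\le 2^n}(T))$, that is $\mathsf{PC}^{2n+1}(T)\rhd_1\mathsf{PC}(\mathsf{Tuple}_{\le 2^n}(T))$.

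Next I would compose this with the interpretation given by Lemma~\ref{PC_to_tuple}, instantiated at $n:=2^n$. That lemma gives $\mathsf{PC}(\mathsf{Tuple}_{\le 2^n}(T))\rhd_1\mathsf{PC}_{\le 2^n}(T)$. Since one-dimensional interpretability is transitive (the composition of two $1$-dimensional interpretations is again $1$-dimensional), combining the two displayed facts immediately delivers $\mathsf{PC}^{2n+1}(T)\rhd_1\mathsf{PC}_{\le 2^n}(T)$, which is exactly the statement to be proved.

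The only place where any genuine care is needed is matching the indices: one must check that the arity bound $2^n$ produced by Lemma~\ref{PC_Tuple_n} is the same bound fed into Lemma~\ref{PC_to_tuple}, and that the exponent of $\mathsf{PC}$ accumulates correctly, namely one extra application of $\mathsf{PC}(-)$ turning $2n$ into $2n+1$. Since all the constituent interpretations are explicitly $1$-dimensional and transitivity of $\rhd_1$ is standard, I do not anticipate a substantive obstacle; the result is, as the word \emph{Combining} in the statement signals, a routine bookkeeping corollary rather than a theorem requiring new ideas.
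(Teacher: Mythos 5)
Your proof is correct and follows exactly the route the paper intends: it combines Lemma~\ref{PC_Tuple_n}, the functoriality Lemma~\ref{PC_funct}, and Lemma~\ref{PC_to_tuple} (instantiated at $2^n$), composing the resulting one-dimensional interpretations. The paper gives no explicit proof beyond citing these same three lemmas, so your write-out matches its argument precisely.
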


\section{Forcing Sequentiality}

In addition to the usual kinds of interpretation we consider \emph{forcing-interpretations} 
(see the survey by Avigad \cite{avigad2004forcing} for an
overview of the method).

For a theory $T$ let $\mathsf{KM}(T)$ (Kripke models of $T$) be the following two-sorted theory. The sorts of $\mathsf{KM}(T)$ are
\begin{enumerate}
    \item $\woso$ (sort of worlds),
    \item $\doso$ (sort of elements of domains in worlds).
\end{enumerate} 
The relations $\mathsf{KM}(T)$ are
\begin{enumerate}
    \item the binary predicate $p^{\woso}\preceq q^{\woso}$ (accessibility relation on worlds), 
    \item binary predicate $D(p^{\woso},x^{\doso})$ (for a fixed $p$ it defines the domain $D_p$ of the Kripke model in the world $p$), 
    \item the predicate $R^{\star}(p^{\woso},x_1^{\doso},\ldots,x_k^{\doso})$ for each $k$-ary predicate $R$ of  the signature of $T$ (for each fixed $p$ it gives the interpretation of $R$ in the world $p$).
\end{enumerate}
For each formula $\varphi(x_1,\ldots,x_n)$ of the language of $T$, we define by recursion the formulas  $p\Vdash \varphi(x_1,\ldots,x_n)$ (the model forces $\varphi$ in the world $p$) of the language of $\mathsf{KM}(T)$:
\begin{enumerate}
    \item $p\Vdash R(x_1,\ldots,x_n)$ is $(\forall q^\woso\preceq p)(\exists r^\woso\preceq q) R^\star(r,x_1,\ldots,x_n)$;
    \item $p\Vdash \varphi(x_1,\ldots,x_n)\land \psi(x_1,\ldots,x_n)$ is\\
    \hspace*{4cm} $\big (p\Vdash \varphi(x_1,\ldots,x_n)\big)\land\big( p\Vdash \psi(x_1,\ldots,x_n)\big)$;
    \item $p\Vdash \lnot\, \varphi(x_1,\ldots,x_n)$ is $(\forall q^{\woso}\preceq p)\lnot\, \big(p\Vdash \varphi(x_1,\ldots,x_n)\big)$;
    \item $p\Vdash \forall y\, \varphi(x_1,\ldots,x_n,y)$ is $(\forall q^{\woso}\preceq p)(\forall y^{\doso})(D(q,y)\to q\Vdash \varphi(x_1,\ldots,x_n,y))$.
\end{enumerate}
The axioms of $\mathsf{KM}(T)$ are
\begin{enumerate}
    \item $\forall p^{\woso}\, p\preceq p$ \/\/ (reflexivity of $\preceq$);
    \item $\forall q^{\woso},q^{\woso},r^{\woso}((p\preceq q\land q\preceq r)\to p\preceq r)$ \/\/ (transitivity of $\preceq$);
    \item $\forall p^{\woso}\,\exists x^\doso\, D(p,x)$\/ \/ (domains are not empty);
    \item $\forall p^{\woso},q^{\woso}\,
    (q\preceq p\to \forall x^\doso(D(p,x)\to D(q,x)))$\/ \/ ($D_p\subseteq D_q$, for $q\preceq p$);
    \item $\forall p^{\woso},x_1^\doso,\ldots,x_k^\doso\,
    (R^\star(p,x_1,\ldots,x_k)\to (D(p,x_1)\land\ldots\land D(p,x_k)))$;
    \item $\forall p^{\woso},q^{\woso}\,
    (q\preceq p\to \forall x_1^\doso,\ldots,x_k^{\doso}\,(R^\star(p,x_1,\ldots,x_k)\to R^\star(q,x_1,\ldots,x_k))$\\ 
    \hspace*{1cm} 
    (downward persistence
    of the interpretations of predicates);
    \item $\forall p^{\woso}\; p\Vdash\varphi$, for all axioms $\varphi$ of $T$.
\end{enumerate}
We say that $U$ is forcing-interpretable in $T$ if there is an interpretation of $\mathsf{KM}(U)$ in $T$.

Immediately from the definition of forcing-interpretation and the fact that interpretations are closed under compositions we get
\begin{lemma}\label{weak_forcing_composition} If $T$ interprets $U$ and $U$ forcing-interprets $V$, then $T$ forcing-interprets $V$.
\end{lemma}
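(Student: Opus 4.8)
The plan is to unfold the definition of forcing-interpretation and then reduce the claim to the composability of ordinary interpretations. By definition, the hypothesis that $U$ forcing-interprets $V$ means precisely that $U$ interprets $\mathsf{KM}(V)$, and the desired conclusion that $T$ forcing-interprets $V$ means precisely that $T$ interprets $\mathsf{KM}(V)$. Thus the entire content of the lemma is the passage from $T\rhd U$ and $U\rhd\mathsf{KM}(V)$ to $T\rhd\mathsf{KM}(V)$, so that forcing-interpretability is really just ordinary interpretability of the associated Kripke-model theory.

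First I would invoke the standard fact that relative interpretations (multi-dimensional, with parameters and definable equality) are closed under composition. Given an interpretation $K$ of $U$ in $T$ and an interpretation $M$ of $\mathsf{KM}(V)$ in $U$, the composite $K\circ M$ is an interpretation of $\mathsf{KM}(V)$ in $T$: one substitutes the $T$-definable domain, the predicate translations, and the parameters supplied by $K$ into the translation clauses of $M$. Since both $K$ and $M$ are of the admissible form, so is the composite, and it provably satisfies all axioms of $\mathsf{KM}(V)$ in $T$: the axioms are verified by $M$ in $U$, and $K$ transfers $U$-provability to $T$-provability. Reading this composite interpretation back through the definition of forcing-interpretability then yields exactly that $T$ forcing-interprets $V$.

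I do not expect any genuine obstacle here; the statement is an immediate corollary of the definition together with the composition lemma for interpretations, which is precisely why the paper flags it as following ``immediately.'' The only point requiring even minimal care is the bookkeeping inside the composition—correctly threading the parameters and the two-sorted structure (the sorts $\woso$ and $\doso$ of $\mathsf{KM}(V)$) through $K$—but this is entirely routine and is subsumed by the general closure-under-composition result, so no diagonalization or model-theoretic construction is involved.
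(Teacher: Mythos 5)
Your proof is correct and follows exactly the paper's argument: unfold the definition of forcing-interpretation as ordinary interpretation of $\mathsf{KM}(V)$, then apply closure of interpretations under composition to pass from $T \rhd U$ and $U \rhd \mathsf{KM}(V)$ to $T \rhd \mathsf{KM}(V)$. The paper treats this as immediate for precisely the reasons you identify, so nothing further is needed.
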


\begin{remark}
Although, we have not checked this carefully, it appears that it is possible to compose forcing-interpretations (and hence forcing-interpretability is a pre-order). However we don't need this fact to obtain the results of the present paper. We note that it is likely that composition of forcing-interpretations will raise the dimension of the composition.
\end{remark}

\begin{lemma}\label{PC_inside_KM} There is an 
interpretation of $\mathsf{KM}(\mathsf{PC}(T))$ in $\mathsf{PC}_{\le 2}(\mathsf{KM}(T))$.
\end{lemma}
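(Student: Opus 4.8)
The plan is to build a Kripke model of $\mathsf{PC}(T)$ out of the Kripke model of $T$ that already lives inside $\mathsf{PC}_{\le 2}(\mathsf{KM}(T))$, by adjoining to it a domain of classes. The guiding observation is that, in a Kripke model, a class is nothing more than a downward-persistent relation between worlds and objects: at each world $p$ the class has an extension among the objects of $D_p$, and persistence of forced membership makes these extensions grow as one passes to future worlds $q\preceq p$. Such a relation is exactly a binary class over the flattened domain of $\mathsf{KM}(T)$, whose elements comprise both worlds ($\sopre_\woso$) and domain objects ($\sopre_\doso$). This is precisely what $\mathsf{PC}_{\le 2}$ supplies and is the reason the arity $2$ is needed.

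Concretely, I keep the worlds of the interpreted model equal to the base worlds, with $\preceq$ and the world-structure unchanged. The domain sort $\doso$ of $\mathsf{KM}(\mathsf{PC}(T))$ is taken to be the disjoint union of the base domain objects (those $a$ with $\sopre_\doso(a)$) and a definable subsort of $\clso{2}$, namely the \emph{admissible} binary classes $X$: those for which $\langle q,a\rangle\in X$ implies $\sopre_\woso(q)\land\sopre_\doso(a)\land D(q,a)$, and which are downward persistent, i.e.\ $\langle q,a\rangle\in X$ and $r\preceq q$ imply $\langle r,a\rangle\in X$. Both conditions use only quantifiers over $\obj$, so this is a legitimate subsort. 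I then let $\sopre_\obj^\star(p,u)$ say that $u$ is an object and $\sopre_{\clso{1}}^\star(p,u)$ say that $u$ is admissible; for a $T$-predicate $R$ I set its starred predicate to be $R^\star$ restricted to objects; and I interpret membership by $\in^\star(p,a,X):=\langle p,a\rangle\in X$. An object $a$ exists at $p$ exactly when $D(p,a)$, while every admissible class is declared to exist at every world.

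The structural axioms of $\mathsf{KM}(\mathsf{PC}(T))$ are then routine: reflexivity and transitivity of $\preceq$ and non-emptiness of domains are inherited from the base; growing domains holds because $D$ grows for objects while the class-domain is constant; and downward persistence of the starred predicates follows from axiom (6) of $\mathsf{KM}(T)$ and from admissibility, respectively. That every relativized $T$-axiom is forced reduces to the fact that the object part of the interpreted model carries exactly the base Kripke model of $T$, so the $\sopre_\obj$-relativized forcing coincides with the base forcing, which validates $T$ by axiom (7) of $\mathsf{KM}(T)$. The heart of the argument is forcing the comprehension scheme. Fix an instance with matrix $\varphi(a)$, all of whose quantifiers range over objects, possibly with class parameters $\vec Y$ and object parameters. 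Inspecting the forcing clauses, the translation $q\Vdash\varphi(a)$ introduces only quantifiers over worlds (through the $(\forall q'\preceq q)(\exists r\preceq q')\ldots$ patterns) and over objects (from the object quantifiers of $\varphi$), together with the base predicates $\preceq,D,R^\star$ and occurrences of the $\vec Y$ read off as $\langle\cdot,\cdot\rangle\in Y$. Hence $\psi(q,a):=\sopre_\woso(q)\land\sopre_\doso(a)\land D(q,a)\land(q\Vdash\varphi(a))$ has all quantifiers on $\obj$ and is admissible in the binary comprehension scheme of $\mathsf{PC}_{\le 2}(\mathsf{KM}(T))$. This produces a binary class $X$ with $\langle q,a\rangle\in X\iff\psi(q,a)$, which is admissible because forced formulas are downward persistent, so $X$ lies in the interpreted class-domain and witnesses comprehension at every world.

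The step I expect to be the main obstacle is the treatment of class equality and the forcing of extensionality. Because the forcing relation here is of Beth (cofinal) type, forced coextensionality of two admissible codes constrains only their behavior at the current world and its futures, not globally, so one cannot simply identify codes with equal global extensions. Instead I would interpret the equality of the $\doso$-sort by a carefully chosen downward-persistent, extensional relation on admissible classes and then verify that it is a congruence for $\in^\star$ and that extensionality is forced. Checking that a single choice is simultaneously persistent, a congruence, and strong enough to force extensionality against the cofinal forcing clauses is the delicate point; the remaining verifications are bookkeeping.
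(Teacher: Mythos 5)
Your core construction is the one the paper uses: keep the worlds of the internal Kripke model, realize classes of the interpreted theory as elements of the $\clso{2}$-sort viewed as world-indexed extensions (the paper calls these \emph{names}), interpret membership through the pairs $\langle p,a\rangle\in X$, and get comprehension from the observation that the forcing translation of a class-quantifier-free formula introduces only quantifiers over worlds and objects, hence is itself an admissible instance of binary comprehension in $\mathsf{PC}_{\le 2}(\mathsf{KM}(T))$. The only cosmetic difference is that you require the codes themselves to be downward persistent and read membership off directly, whereas the paper allows arbitrary names and builds persistence into the membership clause ($p\Vdash x\in A$ iff $\langle q,x\rangle\in A$ for some $q\succeq p$); these two bookkeeping choices are interchangeable.

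The gap is your final paragraph. You rightly see that literal identity of codes cannot serve as class equality, but you leave the ``carefully chosen'' relation unspecified and present its existence as the main obstacle. In fact there is nothing to choose: take forced coextensionality itself, i.e., interpret equality on the class sort by $p\Vdash A=B$ iff $p\Vdash \forall z\,(z\in A\iff z\in B)$. This is legitimate because the paper works with interpretations admitting \emph{definable equality} (stated in its preliminaries), so $=$ need not be interpreted as identity of codes. Your three desiderata are then nearly automatic. Persistence: forced formulas are downward persistent, hence so is this relation. Extensionality: the axiom $\forall X,Y\,(\forall z\,(z\in X\iff z\in Y)\to X=Y)$ is forced essentially by definition, since any world forcing the antecedent satisfies the defined equality at that very world. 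Congruence: to force $(X=Y\land a\in X)\to a\in Y$ one never needs to upgrade ``some world below $r$ forces $a\in Y$'' to ``$r$ forces $a\in Y$''; the clauses for $\lnot$ and $\land$ (hence for $\to$) only require, below any world forcing the antecedent, \emph{some} world forcing the consequent, and such a world is produced directly by unwinding $r\Vdash X=Y$ and instantiating $z:=a$. So the cofinal, Beth-style character of the semantics, which you identified as the source of the difficulty, is precisely what makes this verification routine rather than delicate; the paper's entire treatment of class identity is the single clause above.
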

\begin{proof}
We work in $\mathsf{PC}_{\le 2}(\mathsf{KM}(T))$ to define the desired interpretation. 

We already have an internal Kripke model $\mathcal{K}$ of $T$ inside the $\obj$-sort. That is, we have a poset of worlds $P^{\mathcal{K}}$, a family of domains $\langle D_p^{\mathcal{K}}\mid p\in P\rangle$ and interpretations $\langle R_p^{\mathcal{K}}\mid p\in P\rangle$ of all $T$ predicates $R$.

We define a Kripke model $\mathcal{S}$ of $\mathsf{PC}(T)$. The poset of worlds $P^{\mathcal{S}}$ simply coincides with $P^{\mathcal{K}}$. We call a $\clso 2$-set $A$ a \emph{name} if it consists only of pairs $\langle p, x\rangle$ such that $p\in P^{\mathcal{K}}$ and $x\in D_p$. For each world $p$ the domain $D_p^{\mathcal{S}}$ extends the domain $D_p^{\mathcal{K}}$ by all names.

Consider a world $p$.
\begin{enumerate}
    \item We put $\mathcal{S},p\Vdash S_{\obj}(x)$ iff $x\in D_p^{\mathcal{S}}$.
    \item We put $\mathcal{S},p\Vdash S_{\clso{1}}(A)$ iff $A$ is a name. 
    \item For each $k$-ary predicate $R$ of $T$ and $x_1,\ldots,x_k\in D_p^{\mathcal{S}}$ we put $\mathcal{S},p\Vdash R(x_1,\ldots,x_k)$ iff $x_1,\ldots,x_k\in D_p^{\mathcal{K}}$ and $\mathcal{K},p\Vdash R(x_1,\ldots,x_k)$. 
    \item We put $\mathcal{S},p\Vdash x\in A$ iff $x\in D_p^{\mathcal{K}}$, $A$ is a name and there exists $q\succeq p$ such that $\langle q, a\rangle \in A$.
\end{enumerate}

We note that the downward persistence of $\in$ is guaranteed
by the definition.
The fact that $\mathcal{K}$ forces the axioms of $T$ obviously implies that $\mathcal{S}$ forces the relativizations to $\obj$ of the axioms of $T$. Let us verify in a world $p$ the forceability of an instance of predicative comprehension $$\exists X^{\clso{1}}\forall x^{\obj}(x\in X\mathrel{\leftrightarrow} \varphi(x,\vec{a},\vec{A}))\text{, where $\vec{a}\in D_p^{\mathcal{K}}$ and $\vec{A}$ are names.}$$ Let $B$ be the following name:
$$B=\{ \langle q, y\rangle\mid q\preceq p, y\in D_q^{\mathcal{K}},\text{ and }\mathcal{S},q\Vdash \varphi(y,\vec{a},\vec{A})\}.$$
The definition is correct (i.e. we obtain $B$ by predicative comprehension), since $\varphi$ doesn't have
quantifiers over classes and, thus, $\mathcal{S},q\Vdash \varphi(y,\vec{a},\vec{A})$ is also expressible by a formula without quantifiers over classes. 
It is easy to see that the formula $\forall x^{\obj}(x\in X\mathrel{\leftrightarrow} \varphi(x,\vec{a},\vec{A}))$ is forced in $p$.

We did not yet treat identity of classes, but that can be easily added by setting 
$p \Vdash A= B$ iff $p \Vdash \forall z\, (z\in A \iff 
z\in B)$.
\end{proof}

\begin{corollary}\label{PC_inside_forcing}
If there is a forcing-interpretation of $U$ in $T$, then there is a forcing-interpretation of $\mathsf{PC}(U)$ in $\mathsf{PC}_{\le n}(T)$, for some $n$.
\end{corollary}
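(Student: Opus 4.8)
The plan is to chain the multi-dimensional functoriality of predicative comprehension with the internal Kripke-model construction already isolated in Lemma~\ref{PC_inside_KM}. Unwinding the definitions, a forcing-interpretation of $U$ in $T$ is precisely an interpretation of $\mathsf{KM}(U)$ in $T$; I fix such an interpretation and let $d$ be its dimension, so that $T\rhd_d \mathsf{KM}(U)$. What must be produced, again by the definition of forcing-interpretation, is an interpretation of $\mathsf{KM}(\mathsf{PC}(U))$ in $\mathsf{PC}_{\le n}(T)$ for a suitable $n$.

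First I would apply Lemma~\ref{PC_inside_KM} with $U$ in the role of $T$. This yields an interpretation of $\mathsf{KM}(\mathsf{PC}(U))$ in $\mathsf{PC}_{\le 2}(\mathsf{KM}(U))$; in effect it reduces the target to interpreting $\mathsf{PC}_{\le 2}(\mathsf{KM}(U))$ inside some $\mathsf{PC}_{\le n}(T)$.

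Next I would lift the given interpretation $T\rhd_d\mathsf{KM}(U)$ through predicative comprehension using the multi-dimensional functoriality Lemma~\ref{PC_funct_mult}. Taking $m=2$ there, the hypothesis $T\rhd_d\mathsf{KM}(U)$ gives $\mathsf{PC}_{\le 2d}(T)\rhd_d \mathsf{PC}_{\le 2}(\mathsf{KM}(U))$. Composing this with the interpretation from the previous paragraph — using that ordinary relative interpretations are closed under composition — produces an interpretation of $\mathsf{KM}(\mathsf{PC}(U))$ in $\mathsf{PC}_{\le 2d}(T)$. By the definition of forcing-interpretation this is exactly a forcing-interpretation of $\mathsf{PC}(U)$ in $\mathsf{PC}_{\le 2d}(T)$, so the corollary holds with $n=2d$.

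The argument is essentially bookkeeping of dimensions and arities, and there is no genuine obstacle once Lemma~\ref{PC_inside_KM} is in hand. The one point requiring care is matching the arity bound in Lemma~\ref{PC_funct_mult} (the factor $nm$ in its conclusion) against the arity $2$ demanded by Lemma~\ref{PC_inside_KM}, which is what forces $n=2d$ rather than a constant. One should also note that $\mathsf{KM}(U)$ is two-sorted, so Lemma~\ref{PC_funct_mult} is being invoked with a multi-sorted $U$; this is harmless, since that lemma is stated for arbitrary theories and the sort predicates are simply carried along into the object sort of the $\mathsf{PC}$-construction.
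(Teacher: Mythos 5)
Your proof is correct and follows exactly the paper's own route: the paper likewise reduces the problem via Lemma~\ref{PC_inside_KM} to interpreting $\mathsf{PC}_{\le 2}(\mathsf{KM}(U))$ in $\mathsf{PC}_{\le n}(T)$ and then invokes Lemma~\ref{PC_funct_mult}. Your version merely makes explicit the dimension bookkeeping (the bound $n=2d$) and the composition step that the paper leaves implicit.
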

\begin{proof}In view of Lemma \ref{PC_inside_KM} it is sufficient to define an interpretation of the
theory
$\mathsf{PC}_{\le 2}(\mathsf{KM}(U))$ in $\mathsf{PC}_{\le n} (T)$. 
The latter can be done using Lemma \ref{PC_funct_mult}.
\end{proof}


For a theory $T$ we denote as $\mathsf{AS}(T)$ (Adjunctive set theory) the extension of $T$ by a fresh predicate symbol is $x\in y$ and axioms:
\begin{enumerate}
    \item $\exists x\forall y\;\lnot\, y\in x$;
    \item $\exists z\forall w\;(w\in z\mathrel{\leftrightarrow}
    (w\in x\lor w=y))$.
\end{enumerate}
A theory $T$ is called \emph{sequential}
if it admits a definitional extension to $\mathsf{AS}(T)$.

\begin{lemma}\label{forcing_AS}
Suppose $T$ is finitely axiomatized theory such that there is a one-dimensional interpretation of $T\dijosu \forall x\,(x=x)$ in $T$. Then there is a forcing-interpretation of $\mathsf{AS}(T)$ in $\mathsf{PC}_{\le n} (T)$, for sufficiently large $n$.
\end{lemma}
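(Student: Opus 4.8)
The plan is to produce an ordinary interpretation of $\mathsf{KM}(\mathsf{AS}(T))$ in $\mathsf{PC}_{\le n}(T)$ for a suitably large $n$; by the definition of forcing-interpretation this is exactly what is required. Working inside $\mathsf{PC}_{\le n}(T)$, the object sort $\obj$ is a model of $T$ and we have classes of $k$-tuples of objects available for $1\le k\le n$. A \emph{world} (forcing condition) will be a finite approximation to a model of $\mathsf{AS}(T)$: a finite list of \emph{codes}, pairwise distinct objects of $\obj$, each carrying a completely determined finite $\in$-extension drawn from the codes already listed, together with the finite amount of $T$-data to which these codes have so far been committed. Such an approximation is packaged into a single object by coding the list of codes and the finite membership diagram as a class of tuples, and $n$ is chosen large enough that this packaging (and the auxiliary data below) fits within arity $\le n$. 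The accessibility relation $q\preceq p$ is \emph{end-extension}: $q$ retains every code of $p$ with its extension frozen and may add new codes. The domain $D_p$ is the set of codes of $p$, and the forced $T$-predicates and the forced $\in$ are read off from the committed data. Since extension never deletes a code, never alters the extension of an old code, and only enlarges the $T$-data, all the persistence axioms of $\mathsf{KM}(\mathsf{AS}(T))$ hold by construction, exactly as in the verification of Lemma~\ref{PC_inside_KM}.

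The hypothesis first enters through the supply of fresh codes. Fixing a one-dimensional interpretation witnessing $T\rhd_1 T\dijosu\forall x\,(x=x)$ gives, inside $\obj$, a definable copy of $T$ on a proper subdomain together with a nonempty disjoint reserve; iterating it produces reserves of arbitrarily large finite size, and a short descent argument shows that $T$ has no finite model, so $\obj$ is infinite and along any $\preceq$-chain we never run out of objects to use as new codes. For the empty-set axiom, every condition extends to one introducing a fresh code with empty extension, so $\exists x\,\forall y\,(y\notin x)$ is forced at every world. For adjunction, given a condition $p$ and codes $x,y\in D_p$ whose extensions are by design already determined in $p$, we extend $p$ by a fresh code $z$ whose committed extension is the extension of $x$ together with $y$; then $q\Vdash\forall w\,(w\in z\leftrightarrow(w\in x\lor w=y))$ because for every $w$ in every further extension the three facts $w\in z$, $w\in x$, and $w=y$ are already decided and frozen. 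The reason the domain must be the growing family of codes rather than all of $\obj$ is precisely this: forcing the universal biconditional of adjunction requires the extensions of $x$ and $y$ to be locked, which is exactly what being a code guarantees.

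The step I expect to be the main obstacle is forcing the axioms of $T$ on this growing domain. A finite condition cannot itself be a model of $T$, so the $T$-axioms must be forced in the limit, by arranging that every $\preceq$-chain assembles into a genuine model of $T$ carrying the generic $\in$. This is where the self-interpretation with room is essential: each fresh code must be introduced not as an arbitrary object but as a reserve object produced by the interpretation, so that designating a new set-code never obstructs the requirement that the codes, under their committed $T$-data, still extend to a model of $T$; the no-finite-model fact guarantees that the book-keeping maintaining this $T$-consistency never blocks, since reserve is available at every stage. Dovetailing the two families of density requirements—closure under adjunction on the one hand, and convergence of each chain to a model of $T$ on the other—while keeping every condition encodable by classes of tuples of arity $\le n$, is the technical heart of the argument.

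Finally, definability of the interpretation is unproblematic. The theory $\mathsf{AS}(T)$ is first-order, and the conditions, the relation $\preceq$, the domain predicate $D$, and the committed interpretations of the $T$-predicates and of $\in$ are all definable by formulas of the many-sorted language of $\mathsf{PC}_{\le n}(T)$ over the object-and-class data; providing such defining formulas is all an interpretation requires, and no appeal to restricted comprehension is needed here, in contrast to the construction of the name $B$ in Lemma~\ref{PC_inside_KM}. One then checks the remaining $\mathsf{KM}$ axioms—reflexivity and transitivity of $\preceq$, nonemptiness and growth of domains, and the persistence clauses—directly, as indicated above. Choosing $n$ large enough to carry out all the encodings completes the construction.
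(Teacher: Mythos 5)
Your proposal has two genuine gaps, and they are not peripheral: they sit exactly where the actual work of the lemma lies. First, your worlds are \emph{finite} partial structures (``a finite list of codes \ldots a completely determined finite $\in$-extension''), but finiteness is not expressible in $\mathsf{PC}_{\le n}(T)$: the theory has no arithmetic, no numerals, and no finiteness predicate, and the class of worlds of the Kripke model must be given by a formula of the language of $\mathsf{PC}_{\le n}(T)$. So the very poset of conditions you describe is not available. Second, the step you yourself flag as ``the technical heart'' --- forcing the axioms of $T$ on a generically growing domain of codes --- is not merely left unproven; it is the wrong move. Forcing in the sense of $\mathsf{KM}$ is defined world-by-world through the clauses for $\forall$, $\land$, $\lnot$, not by ``chains assembling in the limit into a model of $T$''; there is no internal generic-model theorem to appeal to. And if one builds the $T$-structure generically from partial conditions, there is no reason why an arbitrary finitely axiomatized $T$ should have each of its axioms forced at each world (this is the classical phenomenon that finite forcing produces models of the forcing companion of $T$, not of $T$ itself). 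Your gesture at ``reserve objects produced by the interpretation'' does not address either problem.

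The paper's proof avoids both issues by a different design. The domain of \emph{every} world is the whole $\obj$-sort, and the predicates of $T$ are interpreted identically by the ambient classes, so the axioms of $T$ are forced trivially at every world; only the membership relation $\in$ is generic. The conditions are \emph{small} binary relations $H$ (a $\clso{1}$-class $D^H$ together with a $\clso{2}$-class $R^H$) ordered by end-extension $\supseteq_{\mathsf{end}}$, where a class is called small if it does not contain the domain of an internal model of $T$ --- this definable notion is the substitute for your inexpressible ``finite''. The hypothesis that $T$ one-dimensionally interprets $T\dijosu \forall x\,(x=x)$ is used precisely here: given an internal model whose domain lies in $A\cup\{x\}$, the interpretation yields an internal model on a strictly smaller domain, and after possibly swapping one element past $x$ one gets a model inside $A$; hence small classes are closed under adjunction, and in particular the domain of a condition is never everything, so fresh elements for the empty-set and adjunction axioms always exist. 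Your end-extension/``frozen extension'' mechanism for verifying those two axioms does match the paper's, but without the smallness device and without the whole-sort, identically-interpreted $T$-part, the construction you outline cannot be carried out.
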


\begin{proof}Let $n$ be the maximum of the arities of all predicates in $T$. We have $n\ge 2$, since we have equality in the signature of $T$. We work in $\mathsf{PC}_{\le n}(T)$.
 
 A model $M$ of the signature of $T$  is a tuple consisting of a $\clso{1}$-class $D^M$ giving the domain of the model and $(k_R)$-classes $R^M$, for each $T$-predicate $R$ of arity $k_R$. Naturally, we express satisfaction of formulas inside $M$. We call $M$ a model of $T$ if all axioms of $T$ are satisfied in it. Note that here we do not require the absoluteness of equality, i.e. the equality predicate  $=^M$ is simply an equivalence relation. Note also that there is the model of $T$, whose domain is the whole $\obj$-sort and whose predicates are interpreted identically in $\mathsf{PC}_{\le n} (T)$. Using the one-dimensional interpretation of $T\dijosu \forall x(x=x)$ in $T$, for any model $M$ of $T$ we obtain a model $M'$ of $T$ such that $D^M\supsetneq D^{M'}$. 

We say that a $\clso{1}$-class $A$ is \emph{small} if there are no models $M\models T$ such that $D^{M}\subseteq A$. The class of the elements of the whole $\obj$-sort is not small, since there is the model of $T$, whose domain is the whole $\obj$-sort. Observe that, for any small $A$ with $x\not \in A$, the class $A\cup \{x\}$ is also small. Otherwise, there would be a model $M$ of $T$, whose domain is contained in  $A\cup \{x\}$, hence there would be a model $M'$ of  $T$ with $D^{M'}\subsetneq A\cup \{x\}$ and, thus, either $M'$ itself, or the result of swapping some element in its domain with $x$, would be a model $M''$ of $T$, whose domain is contained in $A$, contradicting the smallness of $A$.  

A binary relation $H$ is a pair consisting of a $\clso{1}$-class $D^{H}$ and a $\clso 2$-class $R^{H}$ such that, whenever 
$\langle x,y\rangle\in R^{H}$, we have $x,y\in D^{H}$. We use $x R^H y$ as a shorthand 
for $\langle x,y\rangle\in R^H$. We say that a binary relation $H$ end-extends a binary relation $K$ and write 
$H\supseteq_{\mathsf{end}} K$, if 
\begin{enumerate}
    \item $D^H\supseteq D^K$;
    \item for any $x,y\in D^K$ we have $xR^Ky$ iff $xR^H y$;
    \item for any $x\in D^K$ and $y\in D^{H}\setminus D^K$ we have $\lnot\, y R^H x$.
\end{enumerate}
We say that a binary relation is \emph{small} if its domain is a small $\clso{1}$-class.

To finish the proof we define an interpretation of $\mathsf{KM}(\mathsf{AS}(T))$. The poset of the worlds of the Kripke model consists of the small binary relations ordered by $\supseteq_{\mathsf{end}}$ (a small binary relation accesses all its small end-extensions).
The domain in each world is simply the whole $\obj$-sort. The interpretations of all the predicates of $T$ in all the worlds are simply the classes corresponding to the 
predicates of $T$. Finally, we interpret the predicate $\in$ in the world $H$ as $R^H$. 

It is trivial to see that, in the Kripke model thus defined, all the axioms of $T$ are forced. The forceability of the axiom of empty class $\exists x\forall y\; \lnot\, y\in x$ is clearly equivalent to the following true statement: (\dag) for any world $H$, 
there is a world $K\supseteq_{\mathsf{end}}H$  and a $\obj$-object $x$, such that, for any $\obj$-object $y$ and $L\supseteq_{\mathsf{end}}K$, 
we have $\lnot\, y R^{L} x$. The statement (\dag) 
is true since, for a given small binary relation $H$, we can take as $x$ any element outside of $D^K$, and 
define the small $K\supseteq_{\mathsf{end}} H$ with the domain 
$D^K=D^H\cup \{x\}$ so that $\lnot\, yR^K x$, for any $y\in D^K$.  
We verify the axiom of adjunction 
$\exists z\, \forall w\, (w\in z\mathrel{\leftrightarrow} (w\in x \lor w=y))$ in a similar manner. 
Thus, we indeed have defined an interpretation of  $\mathsf{KM}(\mathsf{AS}(T))$.
\end{proof}

\begin{remark}
We note that the forcing-interpretation defined in the proof
of Lemma~\ref{forcing_AS} is an analogue of what is called
an $\obj$-direct interpretation in \cite{viss:card09}.
This means that the interpretation preserves the domain and the identity relation for the $\obj$-sort. Moreover, it preserves $T$
identically on the $\obj$-sort.
\end{remark}

\section{Proof of the Main Theorem}
Recall that $\mathsf{S}^1_2$ is a weak arithmetical system capable of  the natural formalization of arguments about $P$-time computable functions (see e.g. \cite{buss:boun86}).
We will assume that finitely axiomatized theories are given
inside $\wear$ with the obvious representations of their
axiom set.

\begin{theorem}[\cite{viss:predi09}]\label{PC_and_Con} For any finitely axiomatized sequential $T$, the theory $\mathsf{PC}(T)$ interprets $\mathsf{S}^1_2+\mathsf{Con}(T)$.
\end{theorem}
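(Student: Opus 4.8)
The plan is to show that $\mathsf{PC}(T)$, for a finitely axiomatized sequential $T$, can carry out a cut-free completeness argument for $T$, yielding $\mathsf{Con}(T)$ on top of a weak arithmetic. Since $T$ is sequential, it interprets $\mathsf{S}^1_2$, and this interpretation lifts to $\mathsf{PC}(T)$ on the $\obj$-sort; so the first task is to produce a copy of $\mathsf{S}^1_2$ inside $\mathsf{PC}(T)$, with cuts if necessary to get a well-behaved model of arithmetic on which syntactic objects (formulas, proofs, finite sequences) can be coded. The key leverage that $\mathsf{PC}(T)$ adds over $T$ is a single layer of predicative classes of $\obj$-objects, and the whole proof rests on using such a class as a \emph{truth definition} or, equivalently, as a \emph{satisfaction class} for the $T$-part of the model.

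Concretely, I would argue as follows. Work inside $\mathsf{PC}(T)$. The $\obj$-sort, together with the interpreted $\mathsf{S}^1_2$, gives a definable sequence-coding apparatus, so formulas and assignments can be coded as $\obj$-objects. Because $T$ is finitely axiomatized, there is a fixed finite bound on the complexity of the formulas one needs to handle; in particular there is a single formula $\varphi_n(e,a)$ expressing ``$e$ codes a $T$-formula of logical complexity at most $n$ and $a$ codes an assignment satisfying it under the actual $T$-interpretation on $\obj$.'' Here the crucial move is that comprehension lets me form the class $S = \{\,e \mid \varphi(e)\,\}$ collecting the (codes of) true sentences, where $\varphi$ is a genuinely first-order (second-order-quantifier-free) formula because the $T$-predicates live on $\obj$ and the recursion on satisfaction can be bounded using the fixed complexity of the axioms of $T$. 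This $S$ is a partial satisfaction/truth class: it satisfies the Tarski clauses up to the relevant complexity, and it makes all axioms of $T$ true.

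From such a truth class the derivation of consistency is standard: one shows, provably in the arithmetic part, that the set of $T$-sentences declared true by $S$ is closed under the rules of a Hilbert- or Tarski-style proof system and contains all the (finitely many) axioms of $T$, while $0=1$ — or rather the image of a contradiction under the $\obj$-interpretation — is not in $S$. Hence no $T$-proof can end in a contradiction, which is precisely $\mathsf{Con}(T)$ as formalized in $\mathsf{S}^1_2$. The soundness-of-the-proof-system step is an induction on the length of derivations, and the comprehension-defined $S$ is exactly the object one inducts against; because the arithmetic is only $\mathsf{S}^1_2$, one must take care that this induction is available, which is why one typically passes to a definable cut on which the needed induction (and sharply bounded or $\Sigma^b_1$-induction) holds.

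The main obstacle I anticipate is the interaction between the weakness of $\mathsf{S}^1_2$ and the length-of-proof induction. In $\mathsf{S}^1_2$ one does not have full induction, so the argument that $S$ is closed under the proof rules must be phrased so that it is provable with only the induction that $\mathsf{S}^1_2$ supplies — this is the technical heart and is exactly where the reference \cite{viss:predi09} does the real work (the predicative comprehension has to supply a satisfaction class robust enough to run a cut-free or tableau-style completeness/consistency argument on a short cut). A secondary point requiring care is ensuring that the single level of comprehension in $\mathsf{PC}(T)$ genuinely suffices: because $T$ is finitely axiomatized there is a uniform complexity bound on everything that must be evaluated, so one class of $\obj$-objects is enough, and no iteration of comprehension (which $\mathsf{PC}(T)$ does not provide) is needed. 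I would therefore frame the proof as a refinement of the arithmetized completeness theorem, stressing that finite axiomatizability collapses the required satisfaction hierarchy to a single predicative class and that the verification stays within $\mathsf{S}^1_2$ on a suitable cut.
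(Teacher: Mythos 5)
The statement you are proving is not actually proved in the paper: it is imported from \cite{viss:predi09} with a citation, so the comparison below is with the argument in that reference (which follows Pud\'ak's method for interpreting $\mathsf{S}^1_2+\mathsf{Con}(\mathsf{ZF})$ in $\mathsf{GB}$). Your overall frame --- sequentiality gives $\mathsf{S}^1_2$ on a cut, comprehension gives a satisfaction class, soundness induction gives consistency --- is the right family of ideas, but there is a genuine gap at the central step. Your claim that ``because $T$ is finitely axiomatized, there is a fixed finite bound on the complexity of the formulas one needs to handle'' is false for the statement $\mathsf{Con}(T)$ you must verify: an (internal, possibly nonstandard) Hilbert- or sequent-style proof of a contradiction from the axioms of $T$ can contain formulas of arbitrary, even nonstandard, logical complexity, no matter how simple the axioms are. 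A satisfaction class built for a \emph{fixed standard} complexity bound $n_0$ therefore yields only \emph{restricted} consistency: no inconsistency proof all of whose formulas have depth at most $n_0$ (equivalently, by the subformula property, cut-free or tableau consistency). The passage from restricted to full consistency needs cut elimination, whose cost is a tower of exponentials of height proportional to the cut-rank of the proof; this is not available in $\mathsf{S}^1_2$, and it cannot be recovered by ``passing to a short cut,'' because no definable cut can be proved closed under the superexponential function $x\mapsto 2_x(x)$ (Solovay shortening gives closure under any fixed finite iterate of exponentiation, never under the tower of variable height). A useful sanity check that your route cannot work as stated: partial truth definitions for formulas of a fixed standard depth, together with induction-on-a-cut, are already first-order available in the sequential theory $T$ itself, with no comprehension at all; so if your argument sufficed, $T$ itself would interpret $\mathsf{S}^1_2+\mathsf{Con}(T)$, contradicting Theorem~\ref{G2}.

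The missing idea --- and the point where $\mathsf{PC}(T)$ genuinely exceeds $T$ --- is to let the depth bound be a \emph{variable}, possibly nonstandard, number $n$, and to consider the second-order statement ``there exists a satisfaction class for formulas of depth at most $n$.'' One then takes $J$ to be the class of all $n$ for which such a satisfaction class exists. Downward closure of $J$ is immediate, and closure under successor is exactly one application of predicative comprehension \emph{with the previous satisfaction class as a parameter}: from a depth-$n$ class one defines a depth-$(n+1)$ class by a single further Tarski step. (Note that the paper's comprehension scheme permits free class parameters in $\varphi$; your explicit disclaimer that ``no iteration of comprehension \dots{} is needed'' discards precisely this mechanism.) After Solovay-style shortening, $J$ contains a definable cut; and for $n$ in that cut, any purported inconsistency proof of size at most $n$ has all its formulas of depth at most $n$, so it can be evaluated by the corresponding satisfaction class and the soundness induction rules it out. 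This gives \emph{full} $\mathsf{Con}(T)$ relativized to the cut, which is what the interpretation of $\mathsf{S}^1_2+\mathsf{Con}(T)$ requires. Your induction-availability worries are real but secondary; the heart of the matter is the cut of variable-depth satisfaction classes, not the bounded-depth truth definition.
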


Since both interpretations and forcing-interpretations lead to natural $P$-time transformations of proofs in the interpreted theory to proofs 
in the interpreting theory we have the following lemma.
\begin{lemma}\label{S^1_2_formalizable_implications} Suppose $T$ and $U$ are finitely axiomatized theories. If $T$ interprets $U$, then $\mathsf{S}^1_2\vdash \mathsf{Con}(T)\to \mathsf{Con}(U)$. If $T$ forcing-interprets $U$, then $\mathsf{S}^1_2\vdash \mathsf{Con}(T)\to \mathsf{Con}(U)$.
\end{lemma}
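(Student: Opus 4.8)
The plan is to observe that both clauses of Lemma~\ref{S^1_2_formalizable_implications} follow from a single core fact: any (forcing-)interpretation yields a $P$-time computable function that transforms a putative $U$-proof of $\bot$ into a $T$-proof of $\bot$, and this transformation is itself verifiable in $\wear$. The statement is a conditional inside $\wear$, so I do not need to exhibit the proof-transformation externally; I need $\wear$ to prove that the transformation has the required property, so that $\wear$ can contrapose and derive $\mathsf{Con}(U)$ from $\mathsf{Con}(T)$.

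First I would handle the ordinary interpretation case. Suppose $T\rhd U$ via a fixed interpretation $\tau$. The classical fact is that if $U\vdash \varphi$ then $T\vdash \varphi^\tau$, where $\varphi^\tau$ is the $\tau$-translation of $\varphi$; moreover the map sending a $U$-proof of $\varphi$ to a $T$-proof of $\varphi^\tau$ is $P$-time computable (one relativizes each axiom and inference, using that $T$ proves the translations of the logical axioms and of the $U$-axioms, which is a finite fixed amount of data since $U$ is finitely axiomatized). Applying this with $\varphi=\bot$ (so $\varphi^\tau$ is again essentially $\bot$, or a $T$-refutable sentence), we get that a $U$-refutation yields a $T$-refutation. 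The key point is that this proof-transformation, being $P$-time, is formalizable in $\wear$, which reasons about $P$-time functions; so $\wear$ proves ``if there is a $U$-proof of $\bot$ then there is a $T$-proof of $\bot$,'' i.e. $\wear\vdash \lnot\,\mathsf{Con}(U)\to\lnot\,\mathsf{Con}(T)$, which contraposes to the desired $\wear\vdash\mathsf{Con}(T)\to\mathsf{Con}(U)$.

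Next I would treat the forcing case, which reduces to the previous one by definition. Recall that $T$ forcing-interprets $U$ means precisely that $T$ interprets $\mathsf{KM}(U)$. So it suffices to establish, $P$-time-uniformly and $\wear$-verifiably, that $\mathsf{Con}(\mathsf{KM}(U))\to\mathsf{Con}(U)$; composing with the ordinary-interpretation case applied to $T\rhd\mathsf{KM}(U)$ then gives $\wear\vdash\mathsf{Con}(T)\to\mathsf{Con}(\mathsf{KM}(U))\to\mathsf{Con}(U)$. For the reduction $\mathsf{KM}(U)\Rightarrow U$ one uses the soundness of Kripke forcing: the axioms of $\mathsf{KM}(U)$ guarantee that forcing is monotone and that every world forces every $U$-axiom, from which one proves by induction on derivations that every world forces every theorem of $U$; if $U\vdash\bot$ then some (indeed every) world forces $\bot$, contradicting the nonemptiness-of-domains and reflexivity axioms. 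The essential observation is that this soundness argument again gives a $P$-time map from $U$-refutations to $\mathsf{KM}(U)$-refutations and is formalizable in $\wear$.

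The main obstacle I expect is the $\wear$-formalization of the forcing-soundness step rather than its mathematical content, which is standard. Concretely, one must check that the induction on $U$-derivations establishing $p\Vdash\psi$ for each theorem $\psi$ can be carried out with the bounded resources of $\wear$: the forcing translation of a formula is defined by a fixed recursion on the (bounded) syntactic structure, so the translated proof has size polynomial in the original, and the verification that each step is a valid $\mathsf{KM}(U)$-inference is a $P$-time check. Once one is confident that $\wear$ proves totality and correctness of this translation — exactly the kind of $P$-time reasoning $\wear$ is designed for — both implications follow uniformly. I would therefore spend the bulk of the write-up pointing to the $P$-time and $\wear$-formalizability of these proof-transformations, treating the underlying logical soundness facts as routine.
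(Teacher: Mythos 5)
Your proposal is correct and takes essentially the same approach as the paper: both clauses rest on the observation that (forcing-)interpretations induce polynomial-time, $\mathsf{S}^1_2$-verifiable transformations turning a $U$-proof of contradiction into a $T$-proof of contradiction. The only cosmetic difference is that you factor the forcing case explicitly through the intermediate statement $\mathsf{Con}(\mathsf{KM}(U))$ (ordinary interpretation translation composed with formalized forcing-soundness), whereas the paper performs the composite transformation in a single step -- producing directly a $T$-proof of the forcability of falsity -- citing Avigad for the polynomial-time bound; the underlying content is identical.
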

\begin{proof} The case of usual interpretations is well-known so we will treat only the case of forcing-interpretations. 

The forcing-interpretations correspond to polynomial-time transformations of proofs (see a discussion in \cite{avigad2004forcing,avigad2003eliminating}). This enables us to formalize in $\mathsf{S}^1_2$ the following reasoning (since $\mathsf{S}^1_2$ is able to naturally work with the polynomial transformations of strings). To prove $\mathsf{Con}(T)\to\mathsf{Con}(U)$ we assume there is a proof $P$ of contradiction from axioms of $U$ and show that then there is a proof of contradiction from axioms of $T$. Indeed, using forcing-interpretation of $U$ in $T$ we simple transform $P$ to a $T$ proof of forcability of falsity, which leads to a proof of contradiction from the axioms of $T$.\end{proof}

\begin{theorem}[G\"odel's Second Incompleteness for interpretations $\mathsf{S}^1_2$]\label{G2} No consistent $T$ interprets $\mathsf{S}^1_2+ \mathsf{Con}(T)$.\end{theorem}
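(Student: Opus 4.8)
The plan is to prove the statement by a diagonal argument, formalized inside $\mathsf{S}^1_2$, mirroring the classical proof of G\"odel's Second Incompleteness Theorem but in the arithmetic-free interpretability setting. Suppose for contradiction that some consistent theory $T$ interprets $\mathsf{S}^1_2 + \mathsf{Con}(T)$. Since $\mathsf{S}^1_2$ is capable of the natural formalization of arguments about $P$-time computable functions, and interpretations yield $P$-time transformations of proofs, I would first invoke Lemma~\ref{S^1_2_formalizable_implications}: from $T \rhd (\mathsf{S}^1_2 + \mathsf{Con}(T))$ we obtain $\mathsf{S}^1_2 \vdash \mathsf{Con}(T) \to \mathsf{Con}(\mathsf{S}^1_2 + \mathsf{Con}(T))$. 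The point is that the existence of the interpretation is itself a finitary combinatorial fact, provable already in $\mathsf{S}^1_2$, so the implication between consistency statements is available inside $\mathsf{S}^1_2$ and not merely in the metatheory.

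Next I would set $U := \mathsf{S}^1_2 + \mathsf{Con}(T)$ and apply the formalized G\"odel--L\"ob reasoning to $U$. Because $U$ extends $\mathsf{S}^1_2$ and proves $\mathsf{Con}(T)$, and because $U$ itself is a sequential finitely-described theory supporting provability predicates, I would use the provable $\Sigma_1$-completeness and the Hilbert--Bernays--L\"ob derivability conditions internal to $\mathsf{S}^1_2$ to derive the formalized second incompleteness phenomenon. Concretely, the self-referential content is that $\mathsf{Con}(T) \to \mathsf{Con}(U)$ (just established) combined with the fact that $U$ proves $\mathsf{Con}(T)$ would let $U$ prove its own consistency $\mathsf{Con}(U)$, which by the formalized version of G\"odel II forces $U$ to be inconsistent. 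Running this entirely inside $\mathsf{S}^1_2$, I would obtain $\mathsf{S}^1_2 \vdash \mathsf{Con}(T) \to \lnot\,\mathsf{Con}(U)$.

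Combining the two implications yields $\mathsf{S}^1_2 \vdash \mathsf{Con}(T) \to (\mathsf{Con}(U) \land \lnot\,\mathsf{Con}(U))$, hence $\mathsf{S}^1_2 \vdash \lnot\,\mathsf{Con}(T)$. But since $T$ interprets $\mathsf{S}^1_2$ (as $U$ extends $\mathsf{S}^1_2$ and $T \rhd U$), Lemma~\ref{S^1_2_formalizable_implications} also gives $\mathsf{S}^1_2 \vdash \mathsf{Con}(T) \to \mathsf{Con}(\mathsf{S}^1_2)$, and more directly, the consistency of $T$ at the meta-level means $\mathsf{Con}(T)$ is true. A true $\Pi_1$ sentence refuted by $\mathsf{S}^1_2$ would make $\mathsf{S}^1_2$ unsound for $\Sigma_1$ sentences, which is false; so $T$ must be inconsistent, contradicting our hypothesis.

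The main obstacle I anticipate is the faithful formalization of the L\"ob-style derivability conditions for the consistency statement $\mathsf{Con}(T)$ at the level of $\mathsf{S}^1_2$, where provable $\Sigma_1$-completeness is delicate and the cut-free versus cut-full proof representations matter for the $P$-time bounds. One must ensure that the provability predicate for $U = \mathsf{S}^1_2 + \mathsf{Con}(T)$ satisfies the internal derivability conditions despite $U$ being only a weak base theory; the standard route is to verify that $\mathsf{S}^1_2$ proves the three Hilbert--Bernays--L\"ob conditions for a suitable Herbrand-style or shortened-proof provability notion, as in the work of Buss and of Pudl\'ak. Once those conditions are in place, the diagonal step is the routine formalized G\"odel II argument; the genuine difficulty is purely in securing the formalized second condition (internal necessitation and the $\Sigma_1$-completeness needed for $\Box\varphi \to \Box\Box\varphi$) within the polynomial-time-reasoning constraints of $\mathsf{S}^1_2$.
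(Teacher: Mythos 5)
The paper offers no proof of Theorem~\ref{G2} at all: it is invoked as a known, black-box result---the interpretation form of the Second Incompleteness Theorem, going back to Feferman and, in this sharpened $\mathsf{S}^1_2$ form, to Pudl\'ak---so there is no internal argument to compare yours against. What you have written is essentially the standard proof of that known result, and it is correct in outline. Setting $U = \mathsf{S}^1_2 + \mathsf{Con}(T)$: the interpretation $T \rhd U$ yields $\mathsf{S}^1_2 \vdash \mathsf{Con}(T) \to \mathsf{Con}(U)$; since $U$ extends $\mathsf{S}^1_2$ and has $\mathsf{Con}(T)$ as an axiom, $U \vdash \mathsf{Con}(U)$; and G\"odel's Second Incompleteness Theorem for r.e.\ extensions of $\mathsf{S}^1_2$ then makes $U$ inconsistent. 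You also correctly isolate the one genuinely delicate ingredient, namely the derivability conditions over $\mathsf{S}^1_2$: full provable $\Sigma_1$-completeness is unavailable in bounded arithmetic (witnesses may force exponentially long proofs), and what is actually needed is provable completeness for the $\Sigma^b_1$ fact ``$p$ is a proof of $\varphi$'', which is feasible and is settled in the work of Buss and Pudl\'ak that you cite. Two points where your write-up should be tightened. First, the closing detour through soundness of $\mathsf{S}^1_2$ is unnecessary and weakens the theorem's hypotheses: once $U$ is inconsistent, the interpretation $T \rhd U$ transports a $U$-proof of contradiction into a $T$-proof of contradiction, so $T$ is inconsistent outright; no semantic assumption about $\mathsf{S}^1_2$ (or anything else) is needed beyond the consistency of $T$ assumed for the reductio. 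Second, Lemma~\ref{S^1_2_formalizable_implications} as stated in the paper applies only to finitely axiomatized theories, whereas Theorem~\ref{G2} quantifies over arbitrary consistent $T$; you need the routine generalization of that lemma to an interpreting theory given by an arbitrary (r.e.\ presented) axiom set, while $U$ itself is unproblematic since $\mathsf{S}^1_2$ is finitely axiomatizable and $\mathsf{Con}(T)$ is a single sentence relative to the chosen presentation of $T$.
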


Finally, we remind the reader of a basic fact about {\sf PC}.
\begin{lemma}\label{vrolijkesmurf}
Suppose $T$ is finitely axiomatized and sequential. Then, ${\sf PC}(T)$ is finitely axiomatizable.
\end{lemma}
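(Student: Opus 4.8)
The plan is to show that for a finitely axiomatized sequential $T$, the infinitely many comprehension axioms of $\mathsf{PC}(T)$ can be replaced by finitely many, using the coding power that sequentiality provides. The key idea is that sequentiality gives us, via the adjunctive set structure of $\mathsf{AS}(T)$, a definable notion of (codes for) finite sequences and hence an arithmetization of syntax inside $T$ itself. With such an internal syntax, I would aim to express a single \emph{universal} comprehension schema: a formula $\mathsf{Sat}(e, x)$ that, for a code $e$ of a first-order formula $\varphi(y)$ (with parameters), holds precisely when $\varphi(x)$ holds. The finitely many axioms would then assert comprehension for this universal satisfaction predicate together with enough closure conditions to recover every particular instance.

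First I would make precise the syntactic setup. Since $T$ is sequential, fix a definitional extension to $\mathsf{AS}(T)$ and, within it, develop the usual machinery of sequences and Gödel codes for $\obj$-formulas, where the bound variables range over $\obj$ and parameters may be $\obj$-elements. Because $\mathsf{PC}(T)$ only allows comprehension for formulas $\varphi$ without second-order quantifiers, the relevant class of formulas is exactly the first-order formulas over the $\obj$-signature of $T$ (with $\obj$-parameters). The crucial point is that this is a class for which a partial satisfaction predicate can be defined by a single first-order formula in the sequential theory: one writes $\mathsf{Sat}(e,\vec a, x)$ meaning "$e$ codes a formula and $x$ satisfies it under the parameter assignment $\vec a$," proved to obey the Tarski clauses by induction on the (coded) build-up of $e$. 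The finite axiomatization of $\mathsf{PC}(T)$ then consists of: (i) the finitely many axioms of $T$ relativized to $\obj$, (ii) the extensionality axiom for $\clso 1$-classes, (iii) a single comprehension axiom $\forall e\,\forall \vec a\,\exists X\,\forall x\,(x\in X \iff \mathsf{Sat}(e,\vec a,x))$, and (iv) finitely many axioms asserting that $\mathsf{Sat}$ correctly decodes the atomic formulas, conjunctions, negations, and quantifiers.

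Next I would verify that this finite theory proves every instance of predicative comprehension. Given an actual formula $\varphi(x)$ of the language, I produce its standard numeral code $\gnum{\varphi}$; then the Tarski clauses for $\mathsf{Sat}$, provable in the finite theory, let me prove $\forall x\,(\mathsf{Sat}(\gnum{\varphi},\vec a, x) \iff \varphi(x,\vec a))$ by an external induction on the structure of $\varphi$ (this is a metatheoretic induction yielding, for each fixed $\varphi$, a proof in the finite theory). Combining with instance (iii) at $e=\gnum{\varphi}$ gives the comprehension instance for $\varphi$. Conversely the finite theory is a subtheory of $\mathsf{PC}(T)$, since each of its axioms is either an axiom of $\mathsf{PC}(T)$ or a $\mathsf{PC}(T)$-theorem (the $\mathsf{Sat}$ clauses hold because comprehension lets one build the needed witnessing classes and the underlying syntactic facts are $T$-theorems). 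Hence the two theories are deductively equivalent, which is what finite axiomatizability requires.

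The main obstacle I anticipate is the correct formulation and verification of the partial satisfaction predicate. The subtlety is that $\mathsf{Sat}$ must be a genuine single first-order $\obj$-formula whose defining properties (the Tarski recursion and its totality on codes) are provable in the finitely axiomatized sequential base — a classical but delicate construction, since a naive full satisfaction predicate is impossible by Tarski's theorem. The resolution exploits that comprehension is restricted to parameter-bounded first-order formulas, so one only needs satisfaction for formulas of bounded logical structure relative to the fixed finite signature, and the sequential coding of finite sequences suffices to carry the recursion. Care is also needed to handle $\obj$-parameters uniformly (coding an assignment as a sequence) and to ensure the comprehension axiom (iii) quantifies over codes $e$ and parameter-sequences $\vec a$ in a way that still yields genuine predicative instances. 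Once the satisfaction predicate is in hand, the remaining steps are routine bookkeeping.
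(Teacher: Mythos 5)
Your proposal has a fatal gap: the satisfaction predicate you need cannot exist. You require a single first-order formula $\mathsf{Sat}(e,\vec a,x)$ (written in $\mathsf{AS}(T)$, which is a \emph{definitional} extension of $T$, so $\mathsf{Sat}$ unfolds to a formula of $T$'s own language) such that your finite theory proves the Tarski biconditional $\forall x\,(\mathsf{Sat}(\gnum{\varphi},\vec a,x)\iff\varphi(x,\vec a))$ for \emph{every} first-order formula $\varphi$ of $T$'s signature. This is exactly a full definable satisfaction predicate for the first-order language of $T$, and Tarski's undefinability theorem rules it out: since $\mathsf{Sat}$ itself translates into a $T$-formula, the diagonal formula $\delta(x):=\lnot\,\mathsf{Sat}(x,x)$ is again a first-order formula of $T$'s language, so your external induction would make the finite theory prove $\mathsf{Sat}(\gnum{\delta},\gnum{\delta})\iff\lnot\,\mathsf{Sat}(\gnum{\delta},\gnum{\delta})$ --- the proposed finite theory is outright inconsistent, not an axiomatization of $\mathsf{PC}(T)$. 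Your suggested escape, that ``one only needs satisfaction for formulas of bounded logical structure,'' rests on a misreading of the predicativity restriction: comprehension in $\mathsf{PC}(T)$ is restricted to formulas without \emph{class} quantifiers, but their first-order quantifier complexity is unbounded, so the formulas you must handle are all first-order formulas, precisely the class for which no uniform definable satisfaction predicate with provable Tarski clauses can exist. (Sequential theories have \emph{partial} satisfaction predicates $\mathsf{Sat}_n$, one for each complexity level $n$, but that is infinitely many formulas and does not yield your single axiom (iii).)

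Note that the paper itself does not prove this lemma; it cites the literature (Visser's work on predicative comprehension). The standard proof avoids satisfaction predicates entirely and runs along Bernays--G\"odel lines, as in the finite axiomatization of {\sf GB} over {\sf ZF}: using sequentiality to code tuples of objects as objects, one writes down finitely many \emph{class existence} axioms --- the atomic relation classes for each of the finitely many predicates of $T$, plus closure of classes under complement, intersection, projection, cylindrification, and coordinate permutations --- together with the finitely many axioms of $T$ and class extensionality. One then shows by an \emph{external} (metatheoretic) induction on the build-up of $\varphi$ that each comprehension instance is provable from these finitely many axioms; parameters are handled by sectioning a class of tuples at the given elements. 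This is where the two hypotheses enter: finite axiomatization of $T$ so that $T$'s axioms can be listed, and sequentiality to supply the tupling needed to reduce $n$-ary definable relations to classes of objects. The external induction on syntax plays the role you wanted $\mathsf{Sat}$ to play, without ever needing a single uniform truth definition.
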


This lemma is well known. For a proof, see e.g. \cite{viss:predi09}.

Now let us prove Theorem \ref{one_dim_thm}.
\begin{proof}
Assume for a contradiction that $T$ one-dimensionally interprets $\mathsf{PC}(T)$. We reason as follows using previously proven lemmas:
\begin{enumerate}
    \item\label{thm_step_1} $T$ one-dimensionally interprets $\mathsf{PC}^n(T)$, for any $n$ (by Lemma \ref{PC_funct});
    \item\label{thm_step_2} $T$ one-dimensionally interprets $\mathsf{PC}_{\le n}(T)$, for any $n$ (by \ref{thm_step_1}. and Lemma \ref{PC_to_PC_le_n});
    \item\label{thm_step_3-} $T$ one-dimensionally interprets $T\dijosu\forall x(x=x)$ (this trivially follows from the fact that $T$ one-dimensionally interprets $\mathsf{PC}(T)$);
    \item\label{thm_step_3} $\mathsf{PC}_{\le n}(T)$ forcing-interprets $\mathsf{AS}(T)$, for some $n$ (by \ref{thm_step_3-}. and Lemma \ref{forcing_AS});
    \item\label{thm_step_4} $\mathsf{PC}_{\le m}(\mathsf{PC}_{\le n}(T))$ forcing-interprets $\mathsf{PC}(\mathsf{AS}(T))$, for some $n$ and $m$ (by \ref{thm_step_3}. and Corollary \ref{PC_inside_forcing});
       \item\label{thm_step_4'} $\mathsf{PC}^m(\mathsf{PC}_{\le n}(T))$ forcing-interprets $\mathsf{PC}(\mathsf{AS}(T))$, for some $n$ and $m$ (by \ref{thm_step_4}. and  Lemma \ref{PC_to_PC_le_n});
    \item\label{thm_step_5} $\mathsf{PC}^n(T)$ forcing-interprets $\mathsf{PC}(\mathsf{AS}(T))$, for some $n$ (by \ref{thm_step_4'}. and Lemma \ref{weak_forcing_composition});
    \item\label{thm_step_6} $T$ forcing-interprets $\mathsf{PC}(\mathsf{AS}(T))$ (by \ref{thm_step_1}., \ref{thm_step_5}., and Lemma \ref{weak_forcing_composition});
    \item\label{thm_step_7} $\mathsf{PC}(\mathsf{AS}(T))$ interprets $\mathsf{S}^1_2+\mathsf{Con}(T)$ (by Theorem \ref{PC_and_Con});
    \item \label{thm_step_8} $\mathsf{PC}(\mathsf{AS}(T))$ interprets $\mathsf{S}^1_2+\mathsf{Con}(\mathsf{PC}(\mathsf{AS}(T)))$ (by \ref{thm_step_6}. in combination with Lemmas \ref{vrolijkesmurf} and \ref{S^1_2_formalizable_implications});
    \item \label{thm_step_9} $\mathsf{PC}(\mathsf{AS}(T))$ is inconsistent (by \ref{thm_step_8}., Theorem \ref{G2});
    \item \label{thm_step_10} $T$ is inconsistent (by \ref{thm_step_9}. and \ref{thm_step_6}.)
\end{enumerate}
So, we are done.
\end{proof}

\section{The Multi-dimensional Case}
In this section we sketch a proof of a generalization of Theorem \ref{one_dim_thm}
\begin{theorem}\label{multi_dim_thm}
No consistent finitely axiomatized $T$ can $n$-dimensionally interpret $\mathsf{PC}_{\le n}(T)$.
\end{theorem}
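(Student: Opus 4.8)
The plan is to follow the ten-step architecture of the proof of Theorem~\ref{one_dim_thm}, the only genuinely new ingredient being a device that turns the $n$-dimensional hypothesis into the one-dimensional data that the main argument consumes. That device is the auxiliary theory $W:=\mathsf{Tuple}_{\le n}(T)$. First I would record the reduction principle that $T\rhd_n U$ implies $\mathsf{Tuple}_{\le n}(T)\rhd_1 U$: an $n$-dimensional interpretation names each element of $U$ by an $n$-tuple of $\obj$-objects, and in $W$ such a tuple is a single $\tuso_n$-object, so the domain of $U$ becomes a definable subclass of $\tuso_n$ and the definable equality of the interpretation becomes a definable equivalence on it. Applied to $T\rhd_n \mathsf{PC}_{\le n}(T)$ this yields $W\rhd_1 \mathsf{PC}_{\le n}(T)$; conversely $\mathsf{PC}_{\le n}(T)\rhd_1 W$, coding a $k$-tuple ($k\le n$) by the singleton arity-$k$ class $\{\langle x_1,\ldots,x_k\rangle\}$ via comprehension and extensionality. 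Thus $W$ and $\mathsf{PC}_{\le n}(T)$ are mutually one-dimensionally interpretable, and $W$ is finitely axiomatized and consistent exactly when $T$ is.

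From this I would extract the crucial \emph{one-dimensional} self-shrinking of $W$. Since $\mathsf{PC}_{\le n}(T)$ proves that classes exist and that the $\obj$-sort is inhabited, it one-dimensionally interprets $W\dijosu\forall x\,(x=x)$ (put $W$ on the singleton classes and use, say, the empty class for the extra sort), and composing with $W\rhd_1\mathsf{PC}_{\le n}(T)$ gives $W\rhd_1 W\dijosu\forall x\,(x=x)$. This is exactly the hypothesis of Lemma~\ref{forcing_AS} with $W$ as base theory, and it is the single place where the $n$-dimensionality of the original interpretation is spent.

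With the self-shrinking in hand the endgame mirrors steps~\ref{thm_step_3}--\ref{thm_step_10}, with $W$ in the role of $T$. Lemma~\ref{forcing_AS} forcing-interprets $\mathsf{AS}(W)$ in $\mathsf{PC}_{\le N}(W)$; Corollary~\ref{PC_inside_forcing} promotes this to a forcing-interpretation of $\mathsf{PC}(\mathsf{AS}(W))$ in a bounded-comprehension extension of $W$; and Lemma~\ref{weak_forcing_composition} is meant to bring everything back so that $W$ itself forcing-interprets $\mathsf{PC}(\mathsf{AS}(W))$. As $\mathsf{AS}(W)$ is finitely axiomatized and sequential, Lemma~\ref{vrolijkesmurf} and Theorem~\ref{PC_and_Con} make $\mathsf{PC}(\mathsf{AS}(W))$ a finitely axiomatized theory interpreting $\mathsf{S}^1_2+\mathsf{Con}(W)$, whence Lemma~\ref{S^1_2_formalizable_implications} upgrades it to one interpreting $\mathsf{S}^1_2+\mathsf{Con}(\mathsf{PC}(\mathsf{AS}(W)))$, contradicting Theorem~\ref{G2}. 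So $\mathsf{PC}(\mathsf{AS}(W))$, hence $W$, hence $T$, is inconsistent.

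The hard part is precisely the pull-back step --- the clause that $W$ itself forcing-interprets $\mathsf{PC}(\mathsf{AS}(W))$. In dimension one the pull-back is free: the forcing-interpretation of $\mathsf{PC}(\mathsf{AS}(T))$ lives in a finite tower $\mathsf{PC}^{B}(T)$, and Lemma~\ref{PC_funct} lets $T$ climb that tower because the functor leaves the source untouched. The multi-dimensional functor of Lemma~\ref{PC_funct_mult}, by contrast, inflates the source --- from $X\rhd_n Y$ it yields only $\mathsf{PC}_{\le nm}(X)\rhd_n \mathsf{PC}_{\le m}(Y)$ --- so naive iteration never raises the comprehension arity available over $W$ above $n$, and the two nested levels of comprehension needed to reach $\mathsf{PC}(\mathsf{AS}(W))$ are not directly interpretable in $W$. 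I expect this climbing problem to be the main obstacle, and I would attack it by keeping every comprehension bounded and letting the forcing construction manufacture the sequential closure from small finite approximations (as in the proof of Lemma~\ref{forcing_AS}), arranging the whole argument so that only a single bounded $\mathsf{PC}_{\le m}$ over $W$, with $m$ tied to the signature of $W$, is ever demanded. Verifying that the dimensions stay finite and that bounded comprehension together with forcing genuinely suffice in place of the one-dimensional tower is the delicate bookkeeping at the core of the multi-dimensional case.
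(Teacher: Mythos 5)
Your opening move---trading the $n$-dimensional interpretation for a one-dimensional one over an auxiliary tuple theory---is also the paper's first step (its Lemma~\ref{T^n_lemma}), and your preliminary claims about $W=\mathsf{Tuple}_{\le n}(T)$ (that $T\rhd_n U$ gives $W\rhd_1 U$, and that $W$ and $\mathsf{PC}_{\le n}(T)$ are mutually one-dimensionally interpretable) are fine. But the proof is incomplete exactly where you flag it, and the gap is not ``delicate bookkeeping'': it cannot be closed with this choice of $W$. The endgame forces the base theory of the one-dimensional run to interpret bounded comprehension \emph{over itself}, nested twice: Lemma~\ref{forcing_AS} applied to base $W$ demands $\mathsf{PC}_{\le N}(W)$ with $N$ at least the maximal arity of $W$'s signature (so $N\ge n+1$, because of $\mathsf{Tp}_n$), and Corollary~\ref{PC_inside_forcing} then demands $\mathsf{PC}_{\le m}(\mathsf{PC}_{\le N}(W))$. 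All your hypothesis delivers is $W\rhd_1\mathsf{PC}_{\le n}(T)$: comprehension over $T$, of arity at most $n$. Because $W$ keeps $\tuso_1,\ldots,\tuso_n$ as separate sorts, already a \emph{unary} class over $W$ amounts to an $n$-tuple of $T$-classes of arities $1,\ldots,n$; packing such a tuple into a single $T$-class requires arity about $2n$ (padding plus equality-pattern flags), and over large finite models of $T$ a counting argument ($2^{d+d^2+\cdots+d^n}>(n+1)2^{d^n}$ for large $d$) rules out any coding of $\mathsf{PC}(W)$-classes by classes of arity $\le n$ over the same $T$-model. So even ``a single bounded $\mathsf{PC}_{\le m}$ over $W$'' is out of reach, not merely the tower, and no rearrangement of the endgame that still runs Lemma~\ref{forcing_AS} over the base $W$ removes the need for it.

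The paper dissolves exactly this mismatch by collapsing the auxiliary theory so that comprehension over it \emph{is} $n$-ary comprehension over $T$. Its theory $T^n$ is one-sorted: every object codes an $n$-tuple of $\mathsf{Dg}$-objects via $\mathsf{Tp}$, and each $\mathsf{Dg}$-object is identified with its constant tuple; hence a class of $T^n$-objects carries no more information than a single $n$-ary class over $T$. Since objects and classes must then share one sort, the paper also replaces $\mathsf{PC}$ by its one-sorted variant $\mathsf{PC}^{\mathsf{st}}$, in which objects are identified with singleton classes, and notes (Lemma~\ref{no_PC_st_int}) that the whole proof of Theorem~\ref{one_dim_thm} goes through for $\mathsf{PC}^{\mathsf{st}}$ despite the overlapping domains. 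The conclusion is then immediate: $\mathsf{PC}_{\le n}(T)\rhd_1\mathsf{PC}^{\mathsf{st}}(T^n)$, so the assumption $T\rhd_n\mathsf{PC}_{\le n}(T)$ yields $T\rhd_n\mathsf{PC}^{\mathsf{st}}(T^n)$, hence $T^n\rhd_1\mathsf{PC}^{\mathsf{st}}(T^n)$ by Lemma~\ref{T^n_lemma}---a consistent finitely axiomatized theory one-dimensionally interpreting its own comprehension extension, contradicting Lemma~\ref{no_PC_st_int}. To repair your write-up, replace $\mathsf{Tuple}_{\le n}(T)$ by $T^n$ (discard the lower tuple sorts and fold the base sort in as the diagonal) and phrase the comprehension extension one-sortedly; the pull-back problem then disappears because the self-referential shape your argument needs is restored.
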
 

Let us define the theory $T^n$. The signature of $T^n$ expands the signature of $T$ by a unary predicate $\mathsf{Dg}$ and and $n+1$-ary predicate $\mathsf{Tp}$. The axioms of $T^n$ are:
\begin{enumerate}
    \item relativization of the axioms of $T$ to $\mathsf{Dg}$;
    \item $\forall x, y_1,\ldots, y_n\,(\mathsf{Tp}(x,y_1,\ldots,y_n)\to \bigwedge\limits_{1\le i\le n} \mathsf{Dg}(y_i))$;
    \item $\forall x,x',y_1,\ldots, y_n, z_1,\ldots, z_n$\\
    \hspace*{0.3cm}
    $((\mathsf{Tp}(x,y_1,\ldots,y_n)\land \mathsf{Tp}(x,z_1,\ldots,z_n))\;\to
     (x=x' \iff
     \bigwedge\limits_{1\le i\le n}y_i=z_i))$;
    \item $\forall x\,\exists y_1,\ldots, y_n\;\mathsf{Tp}(x,y_1,\ldots,y_n)$;
    \item $\forall y_1,\ldots,y_n\,(\bigwedge\limits_{1\le i\le n} \mathsf{Dg}(y_i)\to \exists x\,\mathsf{Tp}(x,y_1,\ldots,y_n))$;
    \item $\forall x\,(\mathsf{Dg}(x)\to \mathsf{Tp}(x,\ldots,x))$.
\end{enumerate}


In $T^n$ we treat $x$ such that $\mathsf{Dg}(x)$ as individuals and we treat arbitrary objects $x$ as tuples of individuals ($x$ corresponds to the unique tuple $\langle y_1,\ldots,y_n\rangle$ such that $\mathsf{Tp}(x,y_1,\ldots,y_n)$). 

It is easy to see that the following lemma holds:
\begin{lemma}\label{T^n_lemma}There is an $n$-dimensional interpretation of $U$ in $T$ iff there is an one-dimensional interpretation of $U$ in $T^n$\end{lemma}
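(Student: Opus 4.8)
The plan is to prove Lemma~\ref{T^n_lemma} by exhibiting, for each direction of the biconditional, an explicit translation between $n$-dimensional interpretations into $T$ and one-dimensional interpretations into $T^n$. The theory $T^n$ is engineered precisely so that its universe already contains named $n$-tuples of ``individuals'' (the objects satisfying $\mathsf{Dg}$), with $\mathsf{Tp}$ providing a definable bijection between arbitrary objects and such tuples. So the core idea is that an $n$-dimensional interpretation, which by definition works with $n$-tuples of elements as its basic objects, can be repackaged as a one-dimensional interpretation once we have first-class access to those tuples, and conversely.

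\textbf{From $n$-dimensional into $T$ to one-dimensional into $T^n$.} Suppose $\iota$ is an $n$-dimensional interpretation of $U$ in $T$, with domain formula $\delta(x_1,\dots,x_n)$ and translations of the $U$-predicates given by formulas in $n$-tuples of $T$-variables. I would define a one-dimensional interpretation $\iota'$ of $U$ in $T^n$ as follows: relativize everything to $\mathsf{Dg}$ so that the $T$-part of $\iota$ runs inside the individuals of $T^n$, and then let the new one-dimensional domain consist of those objects $x$ whose associated tuple $\langle y_1,\dots,y_n\rangle$ (extracted via $\mathsf{Tp}$) satisfies $\delta^{\mathsf{Dg}}(y_1,\dots,y_n)$. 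Each $U$-predicate translation, originally a formula in several $n$-tuples, gets rewritten as a formula in single $T^n$-objects by using $\mathsf{Tp}$ to recover the component individuals of each object. Axioms (2)--(5) of $T^n$ guarantee that every object decomposes uniquely into a tuple and every tuple of individuals is realized, so this rewriting is well-defined and faithful; verifying that $\iota'$ validates the axioms of $U$ reduces directly to the corresponding verification for $\iota$.

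\textbf{From one-dimensional into $T^n$ to $n$-dimensional into $T$.} Conversely, given a one-dimensional interpretation $\kappa$ of $U$ in $T^n$, I would first eliminate the extra vocabulary $\mathsf{Dg}$ and $\mathsf{Tp}$ by interpreting $T^n$ itself $n$-dimensionally in $T$: take the objects of this interpretation to be $n$-tuples over the $T$-universe, let $\mathsf{Dg}$ single out the ``diagonal'' tuples $\langle x,\dots,x\rangle$ (matching axiom (6)), interpret $\mathsf{Tp}$ as the relation pairing a tuple with the sequence of its diagonal projections, and interpret each genuine $T$-predicate on the diagonal copies of the individuals. One checks that this gives an $n$-dimensional interpretation of $T^n$ in $T$; composing it with $\kappa$ (interpretations being closed under composition, with dimensions multiplying) and observing that the composite has dimension $n \cdot 1 = n$ yields the desired $n$-dimensional interpretation of $U$ in $T$.

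\textbf{Main obstacle.} The conceptual content is routine, but the step I expect to require the most care is the composition in the converse direction: one must check that interpreting $T^n$ $n$-dimensionally in $T$ and then applying the one-dimensional $\kappa$ genuinely composes to dimension exactly $n$ rather than blowing up, and that identity and the auxiliary sort predicates are handled coherently through the composition. A subtlety worth flagging is that $U$ may carry nontrivial definable equality, so throughout both directions I would track how the interpreted equality relation is transported, ensuring that the tuple-decomposition via $\mathsf{Tp}$ respects the congruence. Since the statement only asserts a biconditional about \emph{existence} of interpretations, it suffices to produce these two constructions and note the straightforward verifications, which is why the lemma can fairly be called ``easy to see.''
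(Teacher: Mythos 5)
Your proposal is correct and takes essentially the approach the paper intends: the paper states Lemma~\ref{T^n_lemma} without proof (``It is easy to see\ldots''), and its surrounding remark---that in $T^n$ the $\mathsf{Dg}$-objects are treated as individuals and arbitrary objects as the tuples supplied by $\mathsf{Tp}$---is exactly the two-way translation you spell out. Both of your directions, including the converse via an $n$-dimensional interpretation of $T^n$ in $T$ (diagonal tuples as individuals) composed with the given one-dimensional interpretation, are the natural fleshing-out of that idea, and your handling of definable equality and dimension count under composition is sound.
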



For a theory $T$, let  us define the theory
$\mathsf{PC}^{\mathsf{st}}(T)$. The language of $\mathsf{PC}^{\mathsf{st}}(T)$ extends the language of $T$ by a fresh unary predicate $\mathsf{Sng}$ and a binary predicate $\in$. The theory $\mathsf{PC}^{\mathsf{st}}(T)$ has the following axioms:
\begin{enumerate}
    \item the axioms of $T$ relativized to $\mathsf{Sng}$;
    \item $\forall x (\forall y(y\in x\mathrel{\leftrightarrow}y=x)\mathrel{\leftrightarrow} \mathsf{Sng}(x))$;
    \item $\forall \vec{p}\,\exists x\,\forall y\,
    (y\in x\mathrel{\leftrightarrow} (\mathsf{Sng}(y)\land \varphi(y,\vec{p}\,)))$, where $\varphi$ is a formula where all occurrences of quantifiers are of the form $\forall z\,(\mathsf{Sng}(z)\to \psi)$.
\end{enumerate}

Notice that the theory $\mathsf{PC}^{\mathsf{st}}(T^n)$ in effect is very similar to $\mathsf{PC}_{\le n}(T)$. Namely, we can simulate, in $\mathsf{PC}^{\sf st}(T^n)$, the sort $\obj$ by $x$ such that $\mathsf{Sng}(x)\land \mathsf{Dg}(x)$. We can simulate the sort $\clso k$ by arbitrary objects and we can interpret the predicate $\langle x_1,\ldots,x_k\rangle \in y$ as $\exists z\,(z\in y\land \mathsf{Sng}(z) \land \mathsf{Tp}(z,x_1,\ldots,x_k,x_1,\ldots,x_1))$. This simulation is almost an interpretation of $\mathsf{PC}_{\le n}(T)$ and the only reason why it isn't (in the sense of interpretation employed in the present paper) is that we interpret different sorts by overlapping domains.
However, in fact this doesn't matter for all the arguments in the previous parts of the paper and, by the same argument as in the proof of Theorem \ref{one_dim_thm}, we get
\begin{lemma} \label{no_PC_st_int}No consistent finitely axiomatizable theory $T$ can
one-dimensionally interpret $\mathsf{PC}^{\mathsf{st}}(T)$.
\end{lemma}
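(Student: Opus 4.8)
The plan is to re-run the proof of Theorem~\ref{one_dim_thm} essentially verbatim, with the one-sorted comprehension theory $\mathsf{PC}^{\mathsf{st}}$ playing the role of $\mathsf{PC}$. The only structural difference between the two is bookkeeping: in $\mathsf{PC}^{\mathsf{st}}(T)$ the $T$-objects and the classes share a single domain, the $T$-objects being exactly the elements satisfying $\mathsf{Sng}$, whereas in $\mathsf{PC}(T)$ they sit on disjoint sorts. Consequently every lemma feeding the proof of Theorem~\ref{one_dim_thm} has an evident $\mathsf{PC}^{\mathsf{st}}$-analogue, obtained by reading ``the $T$-objects'' as ``the elements satisfying $\mathsf{Sng}$''. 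I expect the sole genuine point to check to be that this overlap of the object-domain with the class-domain does no harm; since every construction below touches the $T$-part only through $\mathsf{Sng}$, the overlap is invisible to it, and this is precisely the sense in which the lemma holds ``by the same argument''.

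I would first record the functoriality of $\mathsf{PC}^{\mathsf{st}}$: if $T\rhd_1 U$, then $\mathsf{PC}^{\mathsf{st}}(T)\rhd_1\mathsf{PC}^{\mathsf{st}}(U)$. Given a one-dimensional interpretation of $U$ in $T$, one relativises the $\mathsf{Sng}$-objects of $\mathsf{PC}^{\mathsf{st}}(T)$ along its domain formula to obtain the $U$-objects, and relativises $\mathsf{PC}^{\mathsf{st}}(T)$-comprehension to this cut; the defining formulas of $\mathsf{PC}^{\mathsf{st}}(U)$-comprehension are predicative over $U$, so their $T$-translations are predicative over $T$ and comprehension is preserved. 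Iterating the hypothesis $T\rhd_1\mathsf{PC}^{\mathsf{st}}(T)$ then gives $T\rhd_1(\mathsf{PC}^{\mathsf{st}})^n(T)$ for all $n$ (the analogue of step~\ref{thm_step_1}), and the hypothesis trivially yields $T\rhd_1 T\dijosu\forall x\,(x=x)$, since the non-$\mathsf{Sng}$ objects of $\mathsf{PC}^{\mathsf{st}}(T)$ form a nonempty second sort (the analogue of step~\ref{thm_step_3-}).

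Next I would produce the forcing-interpretation of $\mathsf{AS}(T)$ in $T$. Iterating $\mathsf{PC}^{\mathsf{st}}$ and using Kuratowski pairing (as in Lemmas~\ref{Pairs_from_PC^2}--\ref{PC_to_PC_le_n}) furnishes, inside $(\mathsf{PC}^{\mathsf{st}})^k(T)$, full finite-arity predicative comprehension over the $\mathsf{Sng}$-objects, i.e.\ all the class machinery of $\mathsf{PC}_{\le n}(T)$. Feeding this into the proof of Lemma~\ref{forcing_AS}---where the domains $D^M,D^H$, the smallness predicate, the $T$-models and the binary relations $R^H$ are all built over $\mathsf{Sng}$-objects---shows that $(\mathsf{PC}^{\mathsf{st}})^k(T)$ forcing-interprets $\mathsf{AS}(T)$; composing with $T\rhd_1(\mathsf{PC}^{\mathsf{st}})^k(T)$ via Lemma~\ref{weak_forcing_composition} gives that $T$ itself forcing-interprets $\mathsf{AS}(T)$. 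This is the step where the object/class overlap must genuinely be inspected, and where I expect the only real (though soft) work to lie.

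From here the endgame is free of $\mathsf{PC}^{\mathsf{st}}$-subtleties, because $\mathsf{AS}(T)$ is \emph{sequential}. Reasoning as in steps~\ref{thm_step_4'}--\ref{thm_step_10}, the $\mathsf{PC}^{\mathsf{st}}$-analogue of Corollary~\ref{PC_inside_forcing} lifts the forcing-interpretation of $\mathsf{AS}(T)$ to a forcing-interpretation of $\mathsf{PC}(\mathsf{AS}(T))$ by some $(\mathsf{PC}^{\mathsf{st}})^k(T)$, so $T$ forcing-interprets $\mathsf{PC}(\mathsf{AS}(T))$. As $\mathsf{PC}(\mathsf{AS}(T))$ is finitely axiomatizable (Lemma~\ref{vrolijkesmurf}) and interprets $\mathsf{S}^1_2+\mathsf{Con}(T)$ (Theorem~\ref{PC_and_Con}), Lemma~\ref{S^1_2_formalizable_implications} together with the forcing-interpretation of $\mathsf{PC}(\mathsf{AS}(T))$ by $T$ upgrades this to an interpretation of $\mathsf{S}^1_2+\mathsf{Con}(\mathsf{PC}(\mathsf{AS}(T)))$ in $\mathsf{PC}(\mathsf{AS}(T))$. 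By Theorem~\ref{G2} the theory $\mathsf{PC}(\mathsf{AS}(T))$ is then inconsistent, and since $T$ forcing-interprets it, $T$ is inconsistent---contradicting the consistency of $T$.
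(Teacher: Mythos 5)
Your proposal is correct and takes essentially the same route as the paper: the paper's own proof of this lemma is precisely the observation that $\mathsf{PC}^{\mathsf{st}}$ simulates the sorted $\mathsf{PC}$-machinery up to overlapping object/class domains, that this overlap is harmless in every construction used earlier (in particular in Lemma~\ref{forcing_AS} and the tupling lemmas), and that the proof of Theorem~\ref{one_dim_thm} therefore goes through unchanged. Your write-up is in fact more explicit than the paper's, which compresses all of this into the remark that the overlap ``doesn't matter for all the arguments in the previous parts of the paper.''
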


Combining Lemma \ref{no_PC_st_int} with Lemma \ref{T^n_lemma} we get
\begin{corollary} \label{no_int_PC_st_T^n}
No consistent finitely-axiomatizable $T$ can $n$-dimensionally interpret $\mathsf{PC}^{\mathsf{st}}(T^n)$.
\end{corollary}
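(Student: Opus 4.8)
The plan is to reduce the statement directly to Lemma~\ref{no_PC_st_int} by means of the dimension-collapsing equivalence of Lemma~\ref{T^n_lemma}, applied not to $T$ but to its tuple-companion $T^n$. Concretely, suppose toward a contradiction that $T$ is consistent and finitely axiomatized and that $T\rhd_n \mathsf{PC}^{\mathsf{st}}(T^n)$. Instantiating Lemma~\ref{T^n_lemma} with $U:=\mathsf{PC}^{\mathsf{st}}(T^n)$, the existence of an $n$-dimensional interpretation of $U$ in $T$ is equivalent to the existence of a one-dimensional interpretation of $U$ in $T^n$; hence $T^n\rhd_1\mathsf{PC}^{\mathsf{st}}(T^n)$. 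If I can check that $T^n$ is itself a consistent, finitely axiomatized theory, then this one-dimensional self-interpretation of $\mathsf{PC}^{\mathsf{st}}(T^n)$ contradicts Lemma~\ref{no_PC_st_int} applied to the theory $T^n$, finishing the argument.

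Two bookkeeping facts about $T^n$ remain to be secured, and they are where all the real (if light) work sits. First, $T^n$ is finitely axiomatized whenever $T$ is: its axioms are the finitely many relativizations to $\mathsf{Dg}$ of the axioms of $T$ together with the fixed finite list of tuple axioms (2)--(6), and $n$ is a fixed constant. Second, $T^n$ is consistent whenever $T$ is. For this I would build a model $N$ of $T^n$ from any model $M\models T$ by letting the domain of $N$ be the set $|M|^n$ of all $n$-tuples over $|M|$, declaring $\mathsf{Dg}^N$ to be the diagonal tuples $\langle a,\ldots,a\rangle$, interpreting each $T$-predicate on the diagonal so that it matches its interpretation in $M$, and setting $\mathsf{Tp}^N$ to hold between $\langle b_1,\ldots,b_n\rangle$ and the $n$ diagonal tuples $\langle b_1,\ldots,b_1\rangle,\ldots,\langle b_n,\ldots,b_n\rangle$ by the evident pairing. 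A routine check confirms axioms (1)--(6): the diagonal carries a faithful copy of $M$, giving (1); the functionality, totality, and surjectivity axioms (2)--(5) hold because every element of $|M|^n$ is literally a tuple of diagonal elements; and the diagonal-collapse axiom (6) holds because $\langle a,\ldots,a\rangle$ is paired with its own repeated components.

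The proof therefore contains no new ideas beyond those already deployed for Theorem~\ref{one_dim_thm}; it is a packaging step. The only point that deserves care --- and the one I would flag as the main (though modest) obstacle --- is that Lemma~\ref{no_PC_st_int} must be invoked with $T^n$ in the role of its theory, so one must resist conflating the self-referential roles: $\mathsf{PC}^{\mathsf{st}}$ is taken over $T^n$, and it is the consistency and finite axiomatizability of $T^n$ (not of $T$ directly) that license the application. Once those two properties of $T^n$ are in hand, the equivalence of Lemma~\ref{T^n_lemma} transports the hypothetical $n$-dimensional interpretation into a one-dimensional one of exactly the forbidden form, and the contradiction is immediate.
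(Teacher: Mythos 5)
Your proposal is correct and is essentially the paper's own proof: the paper obtains Corollary~\ref{no_int_PC_st_T^n} precisely by combining Lemma~\ref{T^n_lemma} (instantiated at $U=\mathsf{PC}^{\mathsf{st}}(T^n)$) with Lemma~\ref{no_PC_st_int} applied to $T^n$. The only difference is that you spell out the bookkeeping the paper leaves implicit --- that $T^n$ inherits consistency (via your diagonal model on $|M|^n$) and finite axiomatizability from $T$ --- and both checks are sound.
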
 
Since, clearly, there is a one-dimensional interpretation of the theory 
$\mathsf{PC}^{\mathsf{st}}(T^n)$ in the theory
$\mathsf{PC}_{\le n}(T)$, Corollary \ref{no_int_PC_st_T^n} implies Theorem \ref{multi_dim_thm}.


\section{Adjunctive Classes meet Adjunctive Sets}\label{forcefulsmurf}
Lemma \ref{forcing_AS} is the key part of the proof of Theorem \ref{one_dim_thm}. In this section we sketch a proof of a more general version of this result that might be interesting on its own.

Let $\mathsf{AC}_{\le n}(T)$ be the theory in the same language as $\mathsf{PC}_{\le n}(T)$. With the following axioms:
\begin{enumerate}
    \item all axioms of $T$ restricted to the domain $\obj$;
    \item $\exists X^{\clso k}\,\forall x_1,\ldots,x_k\, \lnot \, \langle x_1,\ldots,x_k\rangle \in X^{\clso k}$; 
    \item $\forall X^{\clso k}, x_1,\ldots,x_k\,
    \exists Y^{\clso k}\, \forall y_1,\ldots,y_k$\\
    \hspace*{1cm}
    $(\langle y_1,\ldots,y_k\rangle\in Y^{\clso k}\leftrightarrow 
    (\langle y_1,\ldots,y_k\rangle\in X^{\clso k}\lor \bigwedge_{1\leq i < k} y_i=x_i))$.
\end{enumerate}

Let $\mathsf{PS}_{\le  n}(T)$ be the extension of $\mathsf{AC}_{\le n}(T)$ be the following predicative separation scheme:
\begin{multline*}
\forall \vec{p}, X^{\clso 1}\, \exists Y^{\clso k}\,\forall x_1^{\obj},\ldots,x_k^{\obj}\\
(\,\langle x_1,\ldots,x_k\rangle \in Y\mathrel{\leftrightarrow}\varphi(x_1,\ldots,x_k,\vec{p}\,)
\land \bigwedge\limits_{1\le i\le k}x_i\in X\,),
\end{multline*}
where $\varphi$ is a formula such that all quantifiers in it are over the $\obj$-sort.

We define the no-universe axiom {\sf NU} as follows:
\begin{itemize}
    \item[{\sf NU}] \hspace{0.2cm}
    $\lnot\,\exists X^{\clso{1}}\,\forall x^{\obj}\; x\in X$
\end{itemize}

Inspection of the part of the proof of Lemma \ref{forcing_AS} where we defined the forcing-interpretation yields the following
sharper lemma.

\begin{lemma}\label{AS_from_non_univ} There is a forcing-interpretation of $\mathsf{AS}(T)$ in
 $\mathsf{PS}_{\le 2}(T)+{\sf NU}$.
\end{lemma}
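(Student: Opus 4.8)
The plan is to mimic the forcing-interpretation constructed in the proof of Lemma~\ref{forcing_AS}, but replacing the ambient theory $\mathsf{PC}_{\le n}(T)$ with the weaker $\mathsf{PS}_{\le 2}(T)+{\sf NU}$, and checking that every use of comprehension in that earlier proof can be downgraded to an instance of the predicative separation scheme $\mathsf{PS}$ together with the adjunctive-class axioms. Concretely, I would build an interpretation of $\mathsf{KM}(\mathsf{AS}(T))$ whose worlds are the \emph{small} binary relations (pairs $\langle D^H, R^H\rangle$ with $D^H$ a $\clso 1$-class and $R^H$ a $\clso 2$-class) ordered by end-extension $\supseteq_{\mathsf{end}}$, exactly as before, with each world carrying the full $\obj$-sort as its domain, the $T$-predicates interpreted by their ambient $\obj$-classes, and $\in$ interpreted in world $H$ by $R^H$.

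The crucial reorganization concerns the notion of \emph{small} class. In Lemma~\ref{forcing_AS} smallness was defined via the existence of a submodel of $T$ inside a class, and the argument used full comprehension to manipulate such models. Here the key observation is that the only properties of smallness actually needed are: (i) the whole $\obj$-sort is not small; (ii) smallness is preserved under adjoining a single point; and (iii) one can always find a fresh point outside any given small domain. First I would isolate a definition of smallness that is expressible and provably enjoys (i)--(iii) using only $\mathsf{PS}_{\le 2}(T)+{\sf NU}$. The no-universe axiom {\sf NU} is what furnishes (i) in this weakened setting: since there is no class containing all $\obj$-objects, no small class can exhaust the sort, so a witness point outside $D^H$ always exists, giving (iii). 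Property (ii) should follow from a separation-style argument showing that enlarging a class by one element cannot create a $T$-model where none existed.

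Granting the combinatorics of smallness, the verification that the Kripke model forces the axioms of $\mathsf{AS}(T)$ is essentially the computation already carried out for Lemma~\ref{forcing_AS}: forcing the empty-set axiom amounts to statement $(\dag)$ there, namely that every world $H$ has an end-extension $K$ introducing a fresh point $x$ with $\lnot\, y R^L x$ for all later $L$; and forcing adjunction is handled symmetrically by end-extending to add the pair $\langle y, x\rangle$ to the designated witness $z$. What must be double-checked is that the classes $R^K, R^L$ witnessing these end-extensions are obtained by $\mathsf{PS}$-separation (separating off the relevant pairs relative to an adjunctive-class modification) rather than by unrestricted comprehension, and that the auxiliary relations needed to define the forcing relation $\Vdash$ on these worlds remain predicative.

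I expect the main obstacle to be exactly the downgrade from comprehension to separation in the smallness argument. In Lemma~\ref{forcing_AS} the proof of property (ii) freely quantified over models $M$ with $D^M\subseteq A\cup\{x\}$ and performed a swapping construction, which implicitly uses comprehension to form the swapped model and to define satisfaction. Re-expressing this under $\mathsf{PS}_{\le 2}$ requires that all the classes involved (the candidate domain, the swapped predicate-interpretations, the satisfaction predicate for the finitely many $T$-axioms) be cut out by predicative separation from already-available classes, using the adjunctive-class closure to build up finite modifications. Since $T$ is finitely axiomatized and the quantifiers in $\mathsf{PS}$ range only over the $\obj$-sort, I anticipate this is achievable, but it is the step where the weakening from $\mathsf{PC}_{\le n}$ to $\mathsf{PS}_{\le 2}+{\sf NU}$ has genuine content and where the careful inspection promised in the statement must take place.
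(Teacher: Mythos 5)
There is a genuine gap, and it sits exactly where you locate the ``main obstacle.'' You retain the model-theoretic notion of smallness (no $T$-model fits inside the class) and propose to prove its closure under adjunction, your property (ii), by a separation-style reworking of the swapping argument from Lemma~\ref{forcing_AS}. But that swapping argument is not merely a comprehension computation: its first step produces, from a model $M$ with $D^M\subseteq A\cup\{x\}$, a model $M'$ with strictly smaller domain, and this step uses the hypothesis of Lemma~\ref{forcing_AS} that $T$ one-dimensionally interprets $T\dijosu \forall x\,(x=x)$. Lemma~\ref{AS_from_non_univ} carries no such hypothesis (nor finite axiomatizability of $T$), and without it property (ii) is simply false for your notion of smallness: if $T$ has a one-element model (say $T$ is pure identity theory), then the empty class is small while every singleton class already contains a $T$-model, so smallness is destroyed by adjoining a single point. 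No amount of care about which classes are cut out by separation can repair this, because the failure is model-theoretic, not a matter of predicativity.

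The intended move, which the paper makes, is that smallness is \emph{superfluous} in this setting: one takes \emph{all} binary relations (pairs of a $\clso{1}$-class and a $\clso{2}$-class) as worlds, ordered by $\supseteq_{\mathsf{end}}$. You in fact observe the key point yourself --- under {\sf NU} \emph{every} $\clso{1}$-class omits some $\obj$-element, not just the small ones --- but you do not draw the conclusion. Once all classes are admitted as world-domains, the three roles smallness played in Lemma~\ref{forcing_AS} are taken over directly by the ambient axioms: {\sf NU} supplies the fresh point needed for the empty-set and adjunction axioms; the adjunctive-class axioms (included in $\mathsf{PS}_{\le 2}(T)$) supply $D^H\cup\{x\}$ as a class; and the predicative separation scheme supplies the $\clso{2}$-classes $R^K$ witnessing the required end-extensions, which is the only place where Lemma~\ref{forcing_AS} used comprehension. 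With that simplification, the remainder of your verification (the Kripke-model construction and the forcing computations, which you correctly carried over) goes through and coincides with the paper's proof.
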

\begin{proof}
We modify the proof of Lemma \ref{forcing_AS}. In the first part, we define in $\mathsf{PC}_{\le n}(T)$ the notion of a small $\clso{1}$-class and prove that the class of small $\clso{1}$-classes is closed under adjunctions of elements. In the second part, we use this notion of smallness to define an interpretation of $\mathsf{KM}(\mathsf{AS}(T))$. 

In the present case, the first part becomes superfluous and for the purpose of the second part we simply consider all classes to be small. Indeed one could see that the proof uses that small $\mathfrak{c}_1$-classes are closed under adjunction, that they satisfy no-universe axiom and we use predicative comprehension to form binary relations on a given small domain (this usage of comprehension could be replaced with the usage of separation). Specifically this properties of small sets are required for the verification of forceability of the axioms of empty set and adjunction.\end{proof}

\begin{remark}
It is very well possible that there is also a non-forcing-interpretation for the same result. However, it is easy to see
that we cannot generally get a non-forcing-interpretation that
preserves $T$ identically on the object sort.
\end{remark}

\begin{lemma}\label{local_interp_of_PS}
Any finite fragment of $\mathsf{PS}_{\le n}(T)$ is interpretable $\mathsf{AC}_{\le n}(T)$. 
\end{lemma}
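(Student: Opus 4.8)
The plan is to prove that any finite fragment of $\mathsf{PS}_{\le n}(T)$ is interpretable in $\mathsf{AC}_{\le n}(T)$ by a \emph{direct} interpretation that preserves the $\obj$-sort and all of $T$, and only reinterprets the class sorts $\clso{1},\ldots,\clso{n}$. The key observation is that a finite fragment of $\mathsf{PS}_{\le n}(T)$ mentions only finitely many instances of the predicative separation scheme, and hence only finitely many separation formulas $\varphi_1,\ldots,\varphi_\ell$ (each with its object-sort parameters $\vec p$). Since $\mathsf{AC}_{\le n}(T)$ already gives us empty classes and closure under adjunction, the classes it produces are exactly the finite classes of tuples built up by finitely many adjunction steps. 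The idea is to enrich each such finitely-generated class with just enough \emph{bookkeeping} so that the finitely many required separations can be performed internally, rather than assuming full comprehension.

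\textbf{Key steps.} First I would fix the finite fragment and extract the finite list of separation instances it uses, together with the bound $k$ on the arities and the tuple of object-parameters appearing. Second, I would define the interpreted class sort not as a raw $\clso{k}$-class but as a finite tuple (coded using the available low-arity classes of $\mathsf{AC}_{\le n}(T)$, possibly together with some object parameters) that records both an underlying adjunctively-built class $X$ and a finite amount of side information indicating which of the finitely many separation formulas $\varphi_i$ have been ``applied'' and with which parameters. The membership predicate $\langle x_1,\ldots,x_k\rangle \in Y$ is then decoded from this data: a tuple is a member iff it belongs to the coded underlying class \emph{and} satisfies the conjunction of the recorded separation conditions $\varphi_i(\cdot,\vec p)$, which is expressible since each $\varphi_i$ has only object-sort quantifiers and there are finitely many of them. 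Third, I would verify that the $\mathsf{AC}_{\le n}$-axioms (empty class, adjunction) still hold for this enriched notion and that each of the finitely many separation instances in the fragment is now provable, because applying separation by $\varphi_i$ amounts to the syntactic operation of adding the label $i$ (and current parameters) to the bookkeeping data, which is a definable operation on the codes. Finally I would check the extensionality-style axiom of $\mathsf{PC}_{\le n}$ if it is carried along (or note that $\mathsf{AC}_{\le n}$ as defined in the excerpt does not demand class-extensionality, so this obligation is light) and confirm that all of $T$ relativized to $\obj$ transfers verbatim since the $\obj$-sort is untouched.

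\textbf{Main obstacle.} The delicate point is that the separation scheme is \emph{nested}: an instance may separate from a class $X$ that was itself obtained by an earlier separation or by adjunction, and a later adjunction might add an element that ought not to be ``re-filtered'' by a previously applied $\varphi_i$. I expect the main work to be in setting up the coding of the bookkeeping data so that the decoded membership relation is stable under all the operations that appear in the fragment, i.e.\ so that the composite of the finitely many recorded conditions correctly tracks the intended set-theoretic content. One clean way to sidestep the nesting is to exploit that the fragment is finite and fixed in advance: by closing the finite set of separation formulas under the Boolean and parameter-substitution operations induced by the fragment (still a finite set, since no new quantifier alternations are introduced and everything stays within object-sort quantification), I can assume the recorded conditions always come from a fixed finite Boolean-closed stock, so that each separation step lands back inside that stock. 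This reduces the problem to a finite case analysis that $\mathsf{AC}_{\le n}(T)$ can carry out directly, and avoids needing the unbounded comprehension that $\mathsf{AC}_{\le n}(T)$ lacks.
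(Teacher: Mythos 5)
Your proposal takes a genuinely different route from the paper, and unfortunately it contains a gap that I do not think can be repaired in the form you describe. The problem is that your bookkeeping data cannot be carried by an interpretation of fixed dimension, because the axioms of the fragment force that data to grow without bound. Concretely: a code in your scheme is an underlying $\mathsf{AC}$-class $X$ together with a finite record of applied conditions $\varphi_i(\cdot,\vec p)$, and the decoded membership is ``in $X$ and satisfying the recorded conditions.'' Now the adjunction axiom (which belongs to $\mathsf{AC}_{\le n}(T)\subseteq\mathsf{PS}_{\le n}(T)$ and hence to any reasonable fragment) is universally quantified over all interpreted classes and all tuples: adjoining $z$ to the decoded class $\{x\in X:\varphi(x,\vec p\,)\}$ when $\lnot\varphi(z,\vec p\,)$ forces the new code either to carry the condition $\varphi(x,\vec p\,)\lor x=z$, which has a \emph{new} parameter, or to carry a new ``exception'' component (and the natural exception class $\{x\in E:\psi\}$ needed after a later separation step does not exist in $\mathsf{AC}_{\le n}(T)$). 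Iterating adjunction $r$ times forces $r$ fresh parameters or components; iterating $\clso{1}$-separation with different parameter tuples forces conditions $\varphi_i(x,\vec p\,)\land\varphi_i(x,\vec p\,')\land\cdots$ with unboundedly many parameter slots. So your ``fixed finite Boolean-closed stock'' does not exist: the closure is finite only at the level of formula \emph{templates}, not of formulas together with their parameters, and a fixed-dimensional interpretation can carry only boundedly many parameters per interpreted class. (Your opening claim that the classes of $\mathsf{AC}_{\le n}(T)$ are ``exactly the finite classes built up by finitely many adjunction steps'' is also a misconception --- the axioms assert closure under adjunction, not that every class arises this way, and ``finitely generated'' is not definable --- but this is secondary to the blow-up problem.)

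The paper's proof avoids manufacturing new classes altogether; this is the idea you are missing. It keeps the identity interpretation on the $\obj$-sort, on $T$, and on every $\clso{k}$-sort with $k>1$, and only \emph{restricts} the $\clso{1}$-domain: a class $X$ is kept iff for each of the finitely many separation formulas $\varphi_i$ of the fragment, all parameters $\vec p_i$, every set $B\subseteq\{1,\ldots,k_i\}$ of coordinates, and every tuple $\vec z$ pinning the remaining coordinates, the class
\[
\{\langle x_1,\ldots,x_{k_i}\rangle \mid \varphi_i(x_1,\ldots,x_{k_i},\vec p_i),\ x_j=z_j \text{ for } j\notin B,\ x_j\in X \text{ for } j\in B\}
\]
exists and is \emph{union friendly}, meaning its union with any class of the same arity exists. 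With this domain, the separation instances of the fragment hold essentially by construction, and the only real work is showing the domain is closed under adjunction: the class required for $X\cup\{x\}$ is decomposed as the finite union over $B'\subseteq B$ of classes already guaranteed for $X$ (with the coordinates in $B\setminus B'$ pinned to $x$), and union-friendliness is precisely what makes this finite union exist. In other words, rather than witnessing the needed separations by coded objects --- which is where the unbounded bookkeeping kills your construction --- the paper pushes all those existence demands into the defining condition of the restricted domain, and then verifies once and for all that this condition is preserved by the one operation ($\clso{1}$-adjunction) that could leave it.
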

\begin{proof}
We fix a finite fragment $U$ of $\mathsf{PS}_{\le n}(T)$. Suppose all the instances of the predicative separation present in $U$ are:
\begin{multline*}
\forall \vec{p}_i, X^{\clso 1}\, \exists Y^{\clso {k_i}}\,\forall x_1^{\obj},\ldots,x_{k_i}^{\obj} \\
(\langle x_1,\ldots,x_{k_i}\rangle \in Y \leftrightarrow
(\varphi_i(x_1,\ldots,x_{k_i},\vec{p}_i)\land \bigwedge\limits_{1\le j\le k_i}x_j\in X)),
\end{multline*}
for $i$ from $1$ to $m$. We work in $\mathsf{AC}_{\le n}$ to define the interpretation of $U$.  We take the identity interpretation for the $\obj$-domain and the signature of $T$ as well as the interpretations of $\clso k$-class domains for $k>1$. We interpret the $\clso{1}$-classes by restricting the domain. For the rest of the proof, we define this restriction.

We say that a $\clso k$-class $X$ is \emph{union friendly}, 
if for any 
$\clso k$-class $Y$, there exists a $\clso k$-class $X\cup Y$, i.e.  a $\clso k$-class $Z$ such that $$\forall x_1,\ldots,x_k\,
( \langle x_1,\ldots,x_k\rangle\in Z \leftrightarrow ( \langle x_1,\ldots,x_k\rangle\in X \lor \langle x_1,\ldots,x_k\rangle\in Y)).$$

The domain of interpretation for $\clso{1}$-classes consists of all $X$ such that for all $1\le i\le m$, parameters $\vec{p}_i$, class $B\subseteq \{1,\ldots,k_i\}$, and $\obj$-elements $z_1,\ldots,z_{k_i}$ there exists a union friendly class
\begin{equation}\label{PS_domain}Y^{\clso{k_i}}=\{\langle x_1,\ldots,x_{k_i}\rangle \mid \varphi_i(x_1,\ldots,x_{k_i}), \bigwedge\limits_{j\in \overline{B}} x_j=z_j,\text{ and }\bigwedge\limits_{j\in B} x_j\in X.\}\end{equation}

The only axiom of $U$ that is not straightforward to check is the adjunction axiom. So, in the rest of the proof, we check that, for any $\clso{1}$-class $X$ from the domain of the interpretation and $\obj$-element $x$, all the classes $X\cup\{x\}$ are in the domain of the interpretation. Indeed, we fix $1\le i\le m$, parameters $\vec{p}_i$, classes $B\subseteq \{1,\ldots,k_i\}$, and $\obj$-elements $z_1,\ldots,z_{k_i}$ and show that there exists a union friendly $\clso k$-class
$$Z^{\clso{k_i}}=\{\langle x_1,\ldots,x_{k_i}\rangle \mid \varphi_i(x_1,\ldots,x_{k_i}), \bigwedge\limits_{j\in B} x_j=z_j,\text{ and }\bigwedge\limits_{j\in B} x_j\in X\cup\{x\}\}.$$
Indeed 
{\small
$$Z^{\clso{k_i}}=\bigcup\limits_{B'\subseteq B}\{\langle x_1,\ldots,x_{k_i}\rangle \mid \varphi_i(x_1,\ldots,x_{k_i}), \bigwedge\limits_{j\in \overline{B}} x_j=z_j,\bigwedge\limits_{j\in B'} x_j\in X, \bigwedge\limits_{j\in B\setminus B'} x_j=x\}.$$
}
We observe that, since we have (\ref{PS_domain}) for $X$, all individual classes in this union exist and are union friendly. This finishes the proof since, clearly, a finite union of union friendly classes is union friendly.
\end{proof}
 
Since the interpretations constructed in Lemma \ref{local_interp_of_PS} simply restricted the $\mathfrak{c}_1$-domain, in fact they preserve the $\mathsf{NU}$-axiom and thus we have
\begin{lemma}\label{local_interp_of_PS_NU}
Any finite fragment of $\mathsf{PS}_{\le n}(T)+\mathsf{NU}$ is interpretable $\mathsf{AC}_{\le n}(T)+\mathsf{NU}$. 
\end{lemma}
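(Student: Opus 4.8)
The plan is to reuse verbatim the interpretation constructed in the proof of Lemma \ref{local_interp_of_PS} and simply observe that it already does the right thing to the no-universe axiom. Recall that in that proof we worked in $\mathsf{AC}_{\le n}(T)$ and built an interpretation of an arbitrary finite fragment $U$ of $\mathsf{PS}_{\le n}(T)$ by taking the identity interpretation on the $\obj$-sort, on the signature of $T$, and on every $\clso{k}$-domain with $k>1$, while \emph{restricting} the $\clso{1}$-domain to the collection of those $X$ satisfying the union-friendliness condition~(\ref{PS_domain}). My plan is to feed the same construction an arbitrary finite fragment of $\mathsf{PS}_{\le n}(T)+\mathsf{NU}$, which is just a finite fragment $U$ of $\mathsf{PS}_{\le n}(T)$ together with the single axiom $\mathsf{NU}$.

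First I would note that, under the interpretation of Lemma \ref{local_interp_of_PS}, the interpreted $\clso{1}$-domain is a \emph{subclass} of the ambient $\clso{1}$-domain, since the interpretation only restricts it. The $\obj$-domain and the membership predicate $\in$ are interpreted identically. Consequently, if $\mathsf{NU}$ holds in the ambient model $\mathsf{AC}_{\le n}(T)+\mathsf{NU}$ — that is, there is no ambient $\clso{1}$-class containing every $\obj$-element — then a fortiori there is no such class in the restricted $\clso{1}$-domain, because any such witness in the smaller domain would also be a witness in the larger one. Hence the relativization of $\mathsf{NU}$ to the interpreted $\clso{1}$-domain is verified in $\mathsf{AC}_{\le n}(T)+\mathsf{NU}$.

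The remaining axioms of the fragment $U$ — the axioms of $T$ on $\obj$, the adjunctive axioms for empty and adjoined classes, and the finitely many instances of predicative separation — are verified exactly as in Lemma \ref{local_interp_of_PS}, since the interpretation is literally the same and the verification there never used the presence or absence of a universe class. Adding $\mathsf{NU}$ to the base theory only enlarges what we may assume and does not disturb those arguments. I do not expect any genuine obstacle here: the entire content of the lemma is the single observation that restricting a domain preserves a universally-quantified non-existence statement like $\mathsf{NU}$. The only point that requires a moment's care is to confirm that the interpreted $\clso{1}$-domain is nonempty (so that it forms a legitimate interpretation), which follows as in the original proof, and that $\mathsf{NU}$ is preserved downward rather than upward — but since $\mathsf{NU}$ asserts the \emph{absence} of a large class, downward preservation along the inclusion of $\clso{1}$-domains is exactly what we get.
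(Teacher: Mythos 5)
Your proposal is correct and coincides with the paper's own argument, which is exactly the one-line observation that the interpretation of Lemma~\ref{local_interp_of_PS} acts as the identity on the $\obj$-sort and on $\in$ while only \emph{restricting} the $\clso{1}$-domain, so the non-existence statement $\mathsf{NU}$ is preserved downward along that restriction. Your extra remarks (re-verifying the other axioms, nonemptiness of the restricted domain) are sound but add nothing beyond what the original construction already gives.
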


\begin{corollary}\label{AS_from_non_univ_2}
There is a forcing-interpretation of ${\sf AS}(T)$ in ${\sf AC}_{\leq 2}(T)+{\sf NU}$.
\end{corollary}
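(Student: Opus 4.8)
The plan is to derive the Corollary by gluing together the forcing-interpretation of Lemma~\ref{AS_from_non_univ} and the local interpretability of Lemma~\ref{local_interp_of_PS_NU}, using the composition principle of Lemma~\ref{weak_forcing_composition}. The only mismatch between these ingredients is that Lemma~\ref{AS_from_non_univ} produces a forcing-interpretation into the \emph{full} theory $\mathsf{PS}_{\le 2}(T)+\mathsf{NU}$, whereas Lemma~\ref{local_interp_of_PS_NU} interprets only finite fragments of the separation scheme. So the first thing I would do is reinspect the forcing-interpretation constructed in the proof of Lemma~\ref{AS_from_non_univ} and record that it calls on predicative separation for only a fixed finite list of instances.

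Concretely, in that forcing-interpretation the worlds are binary relations, and the $\obj$-domain together with the $T$-predicates is carried over identically and downward-persistently. Hence each forced $T$-axiom is witnessed directly by the corresponding axiom of $T$ already present in $\mathsf{AC}_{\le 2}(T)$, with no recourse to separation; this is crucial because there may be infinitely many such axioms. Separation is invoked only in building the end-extensions $K\supseteq_{\mathsf{end}}H$ needed to force the two set-theoretic axioms of $\mathsf{AS}(T)$ — the empty-class axiom and the adjunction axiom — where one must separate out the predecessors of a designated element and adjoin finitely many new pairs to $R^H$. These are governed by fixed formulas built from $\in$, so they amount to a finite set $U_0$ of separation instances, independent of $T$ and of its axiomatization. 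Thus Lemma~\ref{AS_from_non_univ} in fact yields a forcing-interpretation of $\mathsf{AS}(T)$ in $\mathsf{AC}_{\le 2}(T)+\mathsf{NU}+U_0$.

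Next I would apply Lemma~\ref{local_interp_of_PS_NU}. Its interpretation restricts only the $\clso{1}$-domain while acting as the identity on $\obj$, on the $T$-predicates, and on the $\clso{k}$-domains for $k>1$; consequently it preserves every axiom of $\mathsf{AC}_{\le 2}(T)$ and the axiom $\mathsf{NU}$, and in addition validates the finitely many instances in $U_0$. It is therefore an interpretation of $\mathsf{AC}_{\le 2}(T)+\mathsf{NU}+U_0$ in $\mathsf{AC}_{\le 2}(T)+\mathsf{NU}$ (the full set of $T$-axioms being preserved automatically, so the possibly infinite axiomatization of $T$ causes no trouble). Feeding this into Lemma~\ref{weak_forcing_composition}, with $\mathsf{AC}_{\le 2}(T)+\mathsf{NU}$ interpreting $\mathsf{AC}_{\le 2}(T)+\mathsf{NU}+U_0$ and the latter forcing-interpreting $\mathsf{AS}(T)$, I conclude that $\mathsf{AC}_{\le 2}(T)+\mathsf{NU}$ forcing-interprets $\mathsf{AS}(T)$, which is exactly the assertion of the Corollary.

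The step I expect to demand the most care is the bookkeeping of the second paragraph: verifying, from the explicit construction in Lemma~\ref{AS_from_non_univ}, that forcing the set axioms — and checking the frame axioms of $\mathsf{KM}(\mathsf{AS}(T))$ — really draws on only boundedly many separation instances, so that a genuine finite $U_0$ can be isolated. If one prefers to sidestep this analysis, one may simply assume $T$ is finitely axiomatized, as elsewhere in the paper; then $\mathsf{KM}(\mathsf{AS}(T))$ is finitely axiomatized and a routine compactness argument shows that the interpretation of Lemma~\ref{AS_from_non_univ} lives inside some finite fragment of $\mathsf{PS}_{\le 2}(T)+\mathsf{NU}$, to which Lemma~\ref{local_interp_of_PS_NU} then applies verbatim.
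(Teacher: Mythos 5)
Your proposal is correct and follows essentially the paper's own route: inspect the forcing-interpretation of Lemma~\ref{AS_from_non_univ} to isolate the finitely many separation instances (needed only for forcing the empty-set and adjunction axioms, and independent of $T$), dispose of these via Lemma~\ref{local_interp_of_PS_NU}, and compose using Lemma~\ref{weak_forcing_composition}. Your bookkeeping is in fact a bit more careful than the paper's, which speaks loosely of a ``finite fragment'' of $\mathsf{PS}_{\le 2}(T)+\mathsf{NU}$; your observation that the $T$-axioms are carried identically (so no separation is spent on them) is exactly what is needed to make that phrase accurate when $T$ is not finitely axiomatized.
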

\begin{proof}
By Lemma \ref{AS_from_non_univ} we have a forcing-interpretation of ${\sf AS}(T)$ in ${\sf PS}_{\leq 2}(T)+{\sf NU}$. Inspection of the 
construction shows that we need certain instances of $\mathsf{PS}$ scheme that are required to verify the forceability of adjunction and empty set axioms. In fact, these
instances do not depend on particular theory $T$. Thus $\mathsf{AS}(T)$ is interpretable in a finite fragment 
of ${\sf PS}_{\leq 2}(T)+{\sf NU}$. Hence by Lemma \ref{local_interp_of_PS_NU} we have an interpretation of ${\sf AS}(T)$ in ${\sf AC}_{\leq 2}(T)+{\sf NU}$.
\end{proof}

\begin{lemma}\label{gourmetsmurf}
There is an interpretation of ${\sf AC}_{\leq 2}(T)+{\sf NU}$ in
${\sf AS}(T)$.
\end{lemma}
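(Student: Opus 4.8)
The plan is to exploit the fact that $\mathsf{AS}(T)$ is itself sequential. Indeed, taking the fresh membership of $\mathsf{AS}(\mathsf{AS}(T))$ to be the given $\in$ shows that $\mathsf{AS}(T)$ admits a definitional extension to $\mathsf{AS}(\mathsf{AS}(T))$, so the standard sequence-coding apparatus of sequential theories is available. Concretely, I would first fix inside $\mathsf{AS}(T)$ a definable cut $I$ that is closed under successor and on which the usual machinery is provably well-behaved: a pairing $\langle\cdot,\cdot\rangle$ with projections, codes for finite sequences of length in $I$, the operation of appending an entry, and the operation of forming, from a sequence $s$ together with a decidable property of its entries, the subsequence of entries satisfying that property. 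All of this is guaranteed by sequentiality.

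With this in place I would define the interpretation. Keep $\obj$ as (a tagged copy of) the whole domain and interpret every predicate of $T$ by itself; this is legitimate since $\mathsf{AS}(T)$ proves $T$ on the whole domain. Interpret a $\clso{1}$-class as (a tagged copy of) a sequence code $s$ of length in $I$, declare $x \in^{\clso{1}} s$ to mean that $x$ occurs among the entries of $s$, and take equality of classes to be equality of codes; note that $\mathsf{AC}_{\le 2}(T)$ contains no extensionality axiom, so no finer congruence is required. The empty class is then the empty sequence and adjunction is appending one entry, whence axioms (1)--(3) of $\mathsf{AC}_{\le 2}(T)$ restricted to $\clso{1}$ are immediate. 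Interpret a $\clso{2}$-class as a sequence of pairs $\langle a,b\rangle$ and read $\langle x_1,x_2\rangle \in^{\clso{2}} s$ as ``$\langle x_1,x_2\rangle$ is an entry of $s$''; because I use the built-in injective pairing rather than Kuratowski pairs, the failure of extensionality in $\mathsf{AS}(T)$ causes no trouble, and empty class and adjunction are handled exactly as for $\clso{1}$.

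The one genuinely nontrivial axiom is $\mathsf{NU}$, which requires $\mathsf{AS}(T)$ to prove that no coded finite sequence has every $\obj$-object among its entries. The hard point is to exhibit, for a given $s$, a witness outside the list while working in a theory with neither separation nor the pigeonhole principle, and I would resolve this by a Russell-style diagonalization. Given $s$ with entries $s_0,\dots,s_{\ell-1}$ where $\ell \in I$, form by filtering-and-adjunction the set $r$ whose members are exactly the $s_j$ with $s_j \notin s_j$ (this is precisely where closure of $I$ under the sequence operations is used). A short case analysis shows $r \neq s_i$ for every $i<\ell$: assuming $r = s_i$, if $s_i \in s_i$ then $s_i \in r$, which would require some entry $s_j = s_i$ with $s_j \notin s_j$, contradicting $s_j = s_i \in s_i$; and if $s_i \notin s_i$ then by construction $s_i \in r = s_i$, again a contradiction. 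Hence $r$ is an $\obj$-object missing from $s$, so $s$ is not universal. Finally I would separate the three sorts by tagging so that the construction is a bona fide multi-sorted interpretation. The main obstacle is exactly this $\mathsf{NU}$ verification, and the Russell diagonalization is what lets it succeed without invoking pigeonhole.
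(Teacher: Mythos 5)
Your proof is correct, but it follows a genuinely different route from the paper's. The paper settles the lemma in two lines by citation: it observes that ${\sf AS}(T)$ easily interprets ${\sf AC}_{\leq 2}(T)$ plus the theory ${\sf InS}$ of an injective binary relation (classes as sets, pairing built from adjunctive sets), and then invokes the results of \cite{viss:card09} to trade ${\sf InS}$ for ${\sf NU}$. You instead give a self-contained construction: you exploit the (trivial) sequentiality of ${\sf AS}(T)$ to obtain a definable cut with sequence coding, interpret $\clso{1}$-classes and $\clso{2}$-classes as coded sequences of objects, respectively of pairs, check the empty-class and adjunction axioms by appending, and carry the real weight of the argument with a Russell-style diagonalization for ${\sf NU}$: filtering a sequence $s$ along the cut produces a genuine ${\sf AS}$-set $r$ whose $\in$-members are exactly the entries $s_j$ with $s_j\notin s_j$, and if $r$ itself occurred among the entries one would get $r\in r\iff r\notin r$. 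This is sound; the filtration does require induction along the sequence, but, as you indicate, that is secured by cut-shortening for the single fixed formula $x\notin x$ (your word ``decidable'' should be ``definable''; the cut depends on the formula, which is harmless since only one instance is needed). As for what each approach buys: the paper's route is much shorter, delegates the no-universe trick to the cited results on pairing, and---as the paper notes in the remark following the theorem---yields an interpretation that is $\obj$-direct and translates $T$ identically on the $\obj$-sort; your route avoids any appeal to \cite{viss:card09} at the price of invoking the standard but nontrivial sequential coding machinery, and because the flattened multi-sorted target theory demands pairwise disjoint sort domains, your tagging gives up the identical translation of $T$ on $\obj$---something the lemma as stated does not require, though the paper's subsequent remark does claim it.
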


\begin{proof}
It is easy to prove we can interpret ${\sf AC}_{\leq 2}$ plus
the theory of an injective binary relation {\sf InS} in {\sf AS}.
See \cite{viss:card09} for a precise definition of {\sf InS}.
Then, the interpretability of ${\sf AC}_{\leq 2}+{\sf NU}$ follows 
by the results of \cite{viss:card09}.
\end{proof}

Combining Lemma \ref{gourmetsmurf} and Corollary \ref{AS_from_non_univ_2} we get
\begin{theorem}
The theories ${\sf AC}_{\leq 2}(T)+{\sf NU}$ in
${\sf AS}(T)$ are mutually forcing-interpretable.
\end{theorem}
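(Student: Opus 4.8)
The plan is to read off mutual forcing-interpretability directly from the two results that immediately precede the statement, Corollary~\ref{AS_from_non_univ_2} and Lemma~\ref{gourmetsmurf}, so that essentially no new construction is required; the only genuine point of care is reconciling the two different notions (ordinary interpretation versus forcing-interpretation) in which these results are phrased.

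One direction is already in hand. Corollary~\ref{AS_from_non_univ_2} states outright that there is a forcing-interpretation of ${\sf AS}(T)$ in ${\sf AC}_{\leq 2}(T)+{\sf NU}$, so I would simply invoke it. For the converse direction, Lemma~\ref{gourmetsmurf} supplies an \emph{ordinary} interpretation of ${\sf AC}_{\leq 2}(T)+{\sf NU}$ in ${\sf AS}(T)$, and I would upgrade this to a forcing-interpretation by the standard observation that every ordinary interpretation is a degenerate forcing-interpretation. Concretely, from an interpretation of a theory $V$ in a theory $W$ one builds an interpretation of $\mathsf{KM}(V)$ in $W$ whose world sort is a single reflexive point, whose domain at that world is the interpreted $V$-domain, and whose predicates $R^\star$ are the interpreted $V$-predicates. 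In a one-world Kripke model the forcing relation $p\Vdash\varphi$ collapses to ordinary satisfaction of $\varphi$ (each clause in the definition of $\Vdash$ reduces to its classical counterpart once the only accessible world is $p$ itself), so forcing the axioms of $V$ is exactly their truth under the given interpretation, while the structural axioms of $\mathsf{KM}(V)$ (reflexivity, transitivity, nonemptiness of domains, and persistence) hold trivially. Hence ${\sf AS}(T)$ forcing-interprets ${\sf AC}_{\leq 2}(T)+{\sf NU}$.

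Combining the two directions yields the claimed mutual forcing-interpretability. I do not expect a real obstacle here, since both halves are quoted results; the only step meriting a sentence is the passage from the ordinary interpretation of Lemma~\ref{gourmetsmurf} to a forcing-interpretation, i.e.\ the single-world case described above. Equivalently, this step can be packaged through Lemma~\ref{weak_forcing_composition}: once one records that ${\sf AC}_{\leq 2}(T)+{\sf NU}$ forcing-interprets itself (again via the one-world model), composing with the ordinary interpretation of Lemma~\ref{gourmetsmurf} gives the desired forcing-interpretation in the second direction, and the theorem follows.
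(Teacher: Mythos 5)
Your proposal is correct and takes essentially the same route as the paper, which obtains the theorem by simply combining Corollary~\ref{AS_from_non_univ_2} with Lemma~\ref{gourmetsmurf}. The only step the paper leaves implicit is the one you spell out---that an ordinary interpretation is a degenerate (one-world) forcing-interpretation, a fact the paper itself alludes to elsewhere as the ``identical one-world interpretation of ${\sf KM}(T)$ in $T$''---so your elaboration is a harmless refinement, not a different argument.
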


Inspecting the proofs, we can see that the result is even a
bit better. Both interpretations are $\obj$-direct and they
identically translate $T$ in the $\obj$-sort.

\begin{lemma}\label{PS_on_small_sets} Suppose $T$ is finitely axiomatized theory such that there is a one-dimensional interpretation of $T\dijosu \forall x\,(x=x)$ in $T$. Then, for a sufficiently large $n$, there is an interpretation of $\mathsf{PS}_{\le n}(T)+{\sf NU}$ in $\mathsf{PS}_{\le n}(T)$.
\end{lemma}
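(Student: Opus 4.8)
The plan is to interpret $\mathsf{PS}_{\le n}(T)+{\sf NU}$ in $\mathsf{PS}_{\le n}(T)$ by a one-sort relativization: I keep the $\obj$-sort, all $T$-predicates, and the higher class sorts $\clso 2,\dots,\clso n$ exactly as they are, and I relativize only the $\clso 1$-sort to the definable collection of \emph{small} classes, reusing the notion of smallness from the proof of Lemma~\ref{forcing_AS}. Fix $n$ to be at least the maximal arity of the predicates of $T$ (and $n\ge 2$). Inside $\mathsf{PS}_{\le n}(T)$ a \emph{model} $M$ of $T$ is coded by a nonempty $\clso 1$-class $D^M$, a $\clso 2$-class $=^M$ that is a congruence, and $\clso{k_R}$-classes $R^M\subseteq (D^M)^{k_R}$ for each predicate $R$, such that the relativized conjunction of the (finitely many) axioms of $T$ holds in $(D^M,=^M,(R^M))$. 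I then call a $\clso 1$-class $A$ \emph{small} if there is no model $M$ of $T$ with $D^M\subseteq A$; this is a genuine first-order formula of $\mathsf{PS}_{\le n}(T)$, the quantifiers over $D^M,=^M,(R^M)$ ranging over the class sorts.

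The two facts driving the construction are established just as in Lemma~\ref{forcing_AS}. First, every internal model $M$ of $T$ has a submodel with strictly smaller domain: I run the given one-dimensional interpretation of $T\dijosu\forall x\,(x=x)$ in $T$ relative to $M$, obtaining a model of $T\dijosu\forall x\,(x=x)$ on a definable subset of $D^M$; its $\sopre_1$-part is a model $M'$ of $T$ and its $\sopre_2$-part is nonempty and disjoint from $D^{M'}$, so $D^{M'}\subsetneq D^M$. One-dimensionality is exactly what guarantees that $D^{M'}$ is again a subset of $D^M$ rather than of a Cartesian power. Second, the small classes are closed under adjunction: given small $A$ with $x\notin A$, if $A\cup\{x\}$ were not small I would obtain a model $M$ with $D^M\subseteq A\cup\{x\}$, pass to the strictly smaller submodel $M'$, and then, when $x\in D^{M'}$, swap $x$ with a witness $e\in D^M\setminus D^{M'}\subseteq A$, producing a model with domain contained in $A$ and contradicting the smallness of $A$. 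Finally, the empty class is small (models have nonempty domain), while any \emph{universal} $\clso 1$-class fails to be small: relative to such a class, separation produces all the predicate classes $R^M$, yielding a full model of $T$ on the whole $\obj$-sort.

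With smallness in hand I verify that the relativization validates the target theory. The $T$-axioms relativized to $\obj$ and the $\mathsf{AC}$/separation axioms on $\clso k$ for $k\ge 2$ are untouched and hold directly. For $\clso 1$, the empty-class and adjunction axioms hold because the empty class is small and small classes are closed under adjunction. Predicative separation with $k=1$ produces $Y\subseteq X$ with $X$ small, whence $Y$ is small (a submodel contained in $Y$ would already contradict the smallness of $X$), so $Y$ lies in the relativized domain; for $k\ge 2$ the separated class is an unrestricted $\clso k$-class. Finally ${\sf NU}$ holds: a small class cannot be universal, since a universal $\clso 1$-class is never small.

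The step I expect to require the most care is checking that the entire construction lives inside $\mathsf{PS}_{\le n}(T)$, where only predicative separation — not full comprehension — is available. Concretely, I must confirm that the domain and predicate classes of the submodel $M'$ (read off from the domain and predicate formulas of the interpretation $j$, relativized to $M$) and the swapped predicate classes of $M''$ are all produced by instances of predicative separation using the ambient classes $D^M,(R^M)$ as class parameters; this is fine because each defining condition uses only $\obj$-quantifiers bounded to the relevant domain together with atomic class-membership in the parameters. The genuinely load-bearing point is the non-smallness of a universal class: it is precisely the availability of separation relative to a putative universal $\clso 1$-class that lets me build the full internal model of $T$, and hence forces ${\sf NU}$ after relativizing to the small classes.
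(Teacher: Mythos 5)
Your proposal is correct and takes essentially the same approach as the paper's proof: relativize only the $\clso{1}$-sort to the small classes of Lemma~\ref{forcing_AS}, keep the $\obj$-sort, the $T$-predicates, and the higher class sorts identical, and use the closure of small classes under adjunction and under subclasses together with the non-smallness of any universal class. Your explicit observation that separation relative to a putative universal $\clso{1}$-class recovers full comprehension, and hence the identity model of $T$ on the whole $\obj$-sort, is precisely the point the paper compresses into ``the same proof as in Lemma~\ref{forcing_AS} shows that if it exists, then it isn't small.''
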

\begin{proof} As discussed in the proof of Lemma \ref{AS_from_non_univ}, the proof of Lemma \ref{forcing_AS} splits into two parts. The present Lemma is obtained by the first part of the proof. Namely we use the same definition of a small class in $\mathsf{PS}_{\le n}(T)$, although now we do not know whether
there exists the $\clso{1}$-class of all elements. None the less,
the same proof as in Lemma \ref{forcing_AS} shows that if it exists, then it isn't small. Also the same proof as before shows that small $\clso{1}$-classes are closed under adjunctions. Thus, we can interpret $\mathsf{PS}_{\le n}(T)+{\sf NU}$ 
in $\mathsf{PS}_{\le n}(T)$ by keeping everything as is, but restricting the domain of $\clso{1}$-classes to small $\clso{1}$-sets.\end{proof}

\begin{corollary} \label{from_AC_to_AS_1}
Suppose $T$ is a finitely axiomatized theory that one-dimensionally interprets $T\sqcup \forall x(x=x)$. Then, for sufficiently large $n$, the theory $\mathsf{AC}_{\le n}(T)$ forcing-interprets $\mathsf{AS}(T)$. 
\end{corollary}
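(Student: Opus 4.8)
The plan is to prove the forcing-interpretation by routing through the richer auxiliary theory $\mathsf{PS}_{\le n}(T)$, where the full predicative separation scheme is available, and only at the very end transferring the construction down to $\mathsf{AC}_{\le n}(T)$ via a finiteness argument. First I would invoke Lemma \ref{PS_on_small_sets}, whose hypothesis is exactly the present assumption that $T$ one-dimensionally interprets $T\dijosu\forall x(x=x)$, to fix an $n$ (large enough, with $n\ge 2$) for which $\mathsf{PS}_{\le n}(T)$ interprets $\mathsf{PS}_{\le n}(T)+{\sf NU}$. Since $\mathsf{AC}_{\le 2}(T)+{\sf NU}$ is, up to forgetting the sorts $\clso{3},\ldots,\clso{n}$ and the separation scheme, literally a subtheory of $\mathsf{PS}_{\le n}(T)+{\sf NU}$, this gives an ordinary interpretation of $\mathsf{AC}_{\le 2}(T)+{\sf NU}$ in $\mathsf{PS}_{\le n}(T)$. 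Composing with the forcing-interpretation of $\mathsf{AS}(T)$ in $\mathsf{AC}_{\le 2}(T)+{\sf NU}$ supplied by Corollary \ref{AS_from_non_univ_2}, and invoking Lemma \ref{weak_forcing_composition}, I obtain a forcing-interpretation of $\mathsf{AS}(T)$ in $\mathsf{PS}_{\le n}(T)$.

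The next step is the crucial finiteness inspection. Because $T$ is finitely axiomatized, so is $\mathsf{AS}(T)$, and hence so is $\mathsf{KM}(\mathsf{AS}(T))$ (its axiom~(7) contributes only one forced axiom per axiom of $\mathsf{AS}(T)$). By definition the forcing-interpretation just obtained is an interpretation of the finitely axiomatized theory $\mathsf{KM}(\mathsf{AS}(T))$ in $\mathsf{PS}_{\le n}(T)$; verifying its finitely many axiom-translations uses only finitely many axioms of $\mathsf{PS}_{\le n}(T)$, that is, only finitely many instances of the predicative separation scheme. Thus there is a finite fragment $U\subseteq\mathsf{PS}_{\le n}(T)$ that already forcing-interprets $\mathsf{AS}(T)$.

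Finally, Lemma \ref{local_interp_of_PS} says exactly that such a finite fragment $U$ of $\mathsf{PS}_{\le n}(T)$ is interpretable in $\mathsf{AC}_{\le n}(T)$, the separation instances being simulated by the union-friendliness construction. One more application of Lemma \ref{weak_forcing_composition}, composing this interpretation with the forcing-interpretation of $\mathsf{AS}(T)$ in $U$, yields the desired forcing-interpretation of $\mathsf{AS}(T)$ in $\mathsf{AC}_{\le n}(T)$.

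I expect the finiteness inspection of the second paragraph to be the main point, and it is precisely what forces the detour through $\mathsf{PS}$: the smallness argument that produces ${\sf NU}$ (the first half of the proof of Lemma \ref{forcing_AS}, reused in Lemma \ref{PS_on_small_sets}) genuinely needs separation both to build the binary relations witnessing that a class fails to be small and to show that smallness is preserved under adjunction, whereas $\mathsf{AC}_{\le n}(T)$ has no such scheme. The saving observation is that only finitely many separation instances are ever invoked, so Lemma \ref{local_interp_of_PS} can trade them for the adjunction-based union-friendly classes inside $\mathsf{AC}_{\le n}(T)$. The one technical point to double-check is that composing an ordinary interpretation with a forcing-interpretation again produces a forcing-interpretation, but this is exactly what Lemma \ref{weak_forcing_composition} guarantees.
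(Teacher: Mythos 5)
Your proposal is correct and takes essentially the same route as the paper: Lemma \ref{PS_on_small_sets} to get $\mathsf{AC}_{\le 2}(T)+{\sf NU}$ (as a subtheory of $\mathsf{PS}_{\le n}(T)+{\sf NU}$) interpreted in $\mathsf{PS}_{\le n}(T)$, a finiteness step exploiting finite axiomatizability, Lemma \ref{local_interp_of_PS} to descend to $\mathsf{AC}_{\le n}(T)$, and Lemma \ref{weak_forcing_composition} to glue the pieces. The only difference is where the finiteness step is applied --- you apply it to the interpretation of the finitely axiomatized $\mathsf{KM}(\mathsf{AS}(T))$ in $\mathsf{PS}_{\le n}(T)$, whereas the paper applies it to the interpretation of the finitely axiomatized $\mathsf{AC}_{\le 2}(T)+{\sf NU}$ in $\mathsf{PS}_{\le n}(T)$ and composes with the forcing-interpretation of $\mathsf{AS}(T)$ only at the very end --- an immaterial reordering.
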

\begin{proof}
Since $T$ is finitely axiomatized, the theory $\mathsf{AC}_{\le 2}(T)+ {\sf NU}$ is also 
finitely axiomatized and, hence,
by Lemma \ref{PS_on_small_sets}, the theory $\mathsf{AC}_{\le 2}(T)+ {\sf NU}$ is interpretable in $\mathsf{PS}_{\le n}(T)$, for some $n$. Since the theory $\mathsf{AC}_{\le 2}(T)+ {\sf NU}$ is finitely axiomatized, it is interpretable in a finite fragment of $\mathsf{PS}_{\le n}(T)$ and, by Lemma \ref{local_interp_of_PS}, in $\mathsf{AC}_{\le n}(T)$. By Lemma \ref{AS_from_non_univ} and Lemma \ref{weak_forcing_composition}, we get a forcing-interpretation of $\mathsf{AS}(T)$ in 
$\mathsf{AC}_{\le n}(T)$.\end{proof}

\begin{corollary}\label{from_AC_to_AS_2}
Suppose finitely axiomatizable $T\rhd_1\mathsf{AC}(T)$. Then $T$ forcing-interprets $\mathsf{AS}(T)$. 
\end{corollary}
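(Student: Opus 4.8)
The plan is to combine the second spin-off, Corollary~\ref{from_AC_to_AS_1}, with an iteration of the hypothesis $T\rhd_1\mathsf{AC}(T)$ that manufactures one-dimensional interpretations of the higher-arity theories $\mathsf{AC}_{\le n}(T)$ inside $T$, and then to compose the resulting interpretation with the forcing-interpretation supplied by Corollary~\ref{from_AC_to_AS_1} via Lemma~\ref{weak_forcing_composition}.

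First I would verify that the hypothesis of Corollary~\ref{from_AC_to_AS_1} is available, i.e. that $T\rhd_1 T\sqcup\forall x\,(x=x)$. This is immediate: inside $\mathsf{AC}(T)$ the $\obj$-sort carries $T$, while the $\clso 1$-sort is a disjoint and provably nonempty domain, since the empty-class axiom guarantees at least one class. Hence $\mathsf{AC}(T)\rhd_1 T\sqcup\forall x\,(x=x)$, and composing with the given $T\rhd_1\mathsf{AC}(T)$ (using that $\rhd_1$ is closed under composition) yields $T\rhd_1 T\sqcup\forall x\,(x=x)$.

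Next I would establish the adjunctive-class analogues of the tuple machinery of Section~\ref{PC_and_tuples}, arriving at $T\rhd_1\mathsf{AC}_{\le n}(T)$ for every $n$. The analogue of Lemma~\ref{PC_funct} reads $T\rhd_1 U\Rightarrow\mathsf{AC}(T)\rhd_1\mathsf{AC}(U)$: one interprets an $\mathsf{AC}(U)$-class as an $\mathsf{AC}(T)$-class all of whose members satisfy the $U$-domain formula, and closure under the empty-class and adjunction axioms is routine and needs no extensionality. Iterating against the hypothesis gives $T\rhd_1\mathsf{AC}^k(T)$ for all $k$. To raise arity I would prove the analogue of Lemma~\ref{Pairs_from_PC^2}, namely $\mathsf{AC}^2(T)\rhd_1\mathsf{AC}_{\le 2}(T)$, by coding a pair $\langle a,b\rangle$ as a Kuratowski class $\{\{a\},\{a,b\}\}$ built by adjunction in the two class-layers of $\mathsf{AC}^2(T)$, and declaring $\langle a,b\rangle\in X$, for a top-layer class $X$, to mean that some member of $X$ is a Kuratowski code of $(a,b)$. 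The analogues of Lemmas~\ref{Tuple_mult}, \ref{PC_Tuple_n} and \ref{PC_to_PC_le_n} then lift this to $T\rhd_1\mathsf{AC}_{\le n}(T)$ for every $n$. The main obstacle is precisely here: $\mathsf{AC}_{\le n}(T)$ carries no extensionality, so I must \emph{not} route through $\mathsf{Tuple}_{\le n}(T)$, whose axioms demand that a tuple-object determine its coordinates uniquely, something adjunctive Kuratowski codes cannot secure. The point that makes the direct construction succeed is that the witnessing codes used to verify the empty-class and adjunction axioms of the $\clso k$-sort are built explicitly by adjunction, so their membership is fully pinned down; consequently coordinate recovery holds for them, and the existentially-defined membership validates the $\mathsf{AC}_{\le n}$-axioms despite the absence of a global extensionality principle.

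Finally I would conclude. By Corollary~\ref{from_AC_to_AS_1}, applied with the hypothesis secured in the first step, there is some $n$ for which $\mathsf{AC}_{\le n}(T)$ forcing-interprets $\mathsf{AS}(T)$. For that same $n$ the second step yields $T\rhd_1\mathsf{AC}_{\le n}(T)$. Since $T$ interprets $\mathsf{AC}_{\le n}(T)$ and $\mathsf{AC}_{\le n}(T)$ forcing-interprets $\mathsf{AS}(T)$, Lemma~\ref{weak_forcing_composition} gives that $T$ forcing-interprets $\mathsf{AS}(T)$, as required.
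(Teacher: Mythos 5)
Your proposal is correct, and its skeleton is exactly the paper's: secure the hypothesis of Corollary~\ref{from_AC_to_AS_1} by noting that $\mathsf{AC}(T)$ interprets $T\dijosu\forall x\,(x=x)$; transport the tuple machinery of Section~\ref{PC_and_tuples} from $\mathsf{PC}$ to $\mathsf{AC}$ so that iterating $T\rhd_1\mathsf{AC}(T)$ yields $T\rhd_1\mathsf{AC}_{\le n}(T)$ for every $n$; then compose with the forcing-interpretation from Corollary~\ref{from_AC_to_AS_1} via Lemma~\ref{weak_forcing_composition}.

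The one point where you genuinely depart from the paper is your claim that the route through $\mathsf{Tuple}_{\le n}(T)$ is blocked in the adjunctive setting, so that the analogue of Lemma~\ref{Pairs_from_PC^2} must be replaced by a direct interpretation of $\mathsf{AC}_{\le 2}(T)$ in $\mathsf{AC}^2(T)$. The paper asserts the opposite: all lemmas of Section~\ref{PC_and_tuples} go through for $\mathsf{AC}_{\le n}$ without modification. The paper is right on this point, and the reason is stated in its Preliminaries: interpretations are taken with \emph{definable equality}. The direction of the $\mathsf{Tuple}$ axiom that genuinely fails for literal identity in the absence of extensionality is not coordinate recovery (adjunctive Kuratowski codes \emph{do} determine their coordinates, by the standard argument read extensionally off the members of the code) but the converse implication that equal coordinates force equal tuple-objects; this is repaired simply by interpreting identity on the $\tuso_2$-sort as ``codes the same pair'', which is a congruence for $\mathsf{Tp}_2$ precisely because coordinate recovery holds. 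So your stated obstacle is not a genuine one. That said, your direct construction --- interpreting a $\clso{2}$-class as a class whose pair-membership reads ``some member is a Kuratowski code of $(a,b)$'', and verifying empty class and adjunction by explicitly adjoining codes --- is correct, and it is essentially the same mathematics with the coding-equivalence folded into the membership relation rather than into a defined equality. The net effect is that your write-up fills in, correctly, exactly the details that the paper's proof compresses into the sentence ``all the proofs work without any modifications'', at the cost of one overstated side-claim about why those details need rearranging.
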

\begin{proof} Clearly $\mathsf{AC}(T)$ interprets $T\sqcup \forall x(x=x)$. Notice that, if we replace $\mathsf{PC}_{\le n}$ with $\mathsf{AC}_{\le n}$ in all the lemmas from Section \ref{PC_and_tuples},
all the proofs work without any modifications. In particular, by the modified version of Lemma \ref{PC_to_PC_le_n}, for each $n$, the theory $\mathsf{AC}_{\le 2^n}(T)$ is interpretable in $\mathsf{AC}^{2n+1}(T)$. Thus, for each $n$, the theory $\mathsf{AC}_{\le n}(T)$ is intepretable in $T$. Hence, by Corollary \ref{from_AC_to_AS_1} and Lemma \ref{weak_forcing_composition}, the theory $T$ forcing-interprets $\mathsf{AS}(T)$.
\end{proof}

\section{Questions and Perspectives}
Our paper points to several potential directions of further research.

Despite the fact that the formulation of Theorem \ref{one_dim_thm} does not employ arithmetization, the proof reduces the result to the usual G\"odel's Second Incompleteness Theorem. Hence we have the following question:
\newcounter{prob}
\begin{enumerate}
\setcounter{enumi}{\value{prob}}
    \item Find a more direct proof of Theorem \ref{one_dim_thm} that does not employ arithmetization.
\setcounter{prob}{\value{enumi}}
\end{enumerate} 

There are questions about generalizing  Theorems \ref{one_dim_thm} and \ref{multi_dim_thm}:
\begin{enumerate}
\setcounter{enumi}{\value{prob}}
    \item Is there a finitely axiomatizable theory $T$ without finite models that does interpret $\mathsf{PC}(T)$?
    \item Is there a theory $T$ axiomatized by finitely many schemes that one-di\-mens\-ionally interprets $\mathsf{PC}^{\mathsf{schem}}(T)$?
    \item Is there a finitely axiomatizable theory $T$ without finite models that one-dimensionally interprets $\mathsf{KM}(\mathsf{PC}(T))$?
\setcounter{prob}{\value{enumi}}
\end{enumerate}
A downside of the main result of this paper is that it doesn't establish $\mathsf{PC}$ as a jump operator, since our result is applicable to finitely axiomatizable theories, but in general we do not have reasons to believe that $\mathsf{PC}(T)$ is finitely axiomatizable for all finitely axiomatizable theories $T$. Thus we have the following question:
\begin{enumerate}
\setcounter{enumi}{\value{prob}}
    \item Is it true that for any finitely axiomatized $T$ there is a finitely axiomatizable subtheory $T'$ of $\mathsf{PC}(T)$ such that $T$ doesn't one-dimensionally interpret $T'$?
\setcounter{prob}{\value{enumi}}
\end{enumerate}
Ideally, the theories $T'$ should be defined by some natural and uniform construction from $T$.

There are questions about the behaviour of $\mathsf{PC}$ operator on (interpretability) weak theories:
\begin{enumerate}
\setcounter{enumi}{\value{prob}}
    \item Characterize the interpretability degree of $\mathsf{PC}(T)$ for classical decidable theories like $\mathsf{Th}(\mathbb{N},+)$, $\mathsf{Th}(\mathbb{N},\times)$, $\mathsf{Th}(\mathbb{N},S)$, $\mathsf{Th}(\mathbb{N},<)$, $\mathsf{Th}(\mathbb{Q},<)$, $\mathsf{Th}(\mathbb{R},0,1,+,\times)$,\\ $\mathsf{Th}(\mathbb{R},0,+)$.
\setcounter{prob}{\value{enumi}}
\end{enumerate}
Also, it might be interesting to figure out the interaction of $\mathsf{PC}$ operator with various tameness notions from model theory.

Basic facts about forcing-interpretations need to be developed.
We need things like a precise definition of composition and the verification of its
desired properties.
An attractive way to do that would be to view the category of forcing-interpretations as a
co-Kleisli category. The ingredients for the desired co-monad $\mathsf{KM}$ would be the
identical one-world interpretation from ${\sf KM}(T)$ in $T$ and an interpretation of
${\sf KM}(T)$ in ${\sf KM}({\sf KM}(T))$, where worlds are interpreted as pairs of worlds.
A further issue is sameness of forcing-interpretations and the related question about the 2-category of forcing-interpretations. We can simply take over notions of sameness/isomorphism from ordinary interpretations, but we can also
think of new ones, e.g. ones inspired by bisimulations of Kripke models.

The central part of our argument is the forcing-interpretation of adjunctive set-theory. So there is a natural question, if forcing was necessary here.
\begin{enumerate}
\setcounter{enumi}{\value{prob}}
\item Is there an interpretation of $\mathsf{AS}(T)$ in $\mathsf{PS}_{\le 2}(T)+\mathsf{NU}$, for finitely axiomatizable theories?
    \item Is there always an interpretation of $\mathsf{AS}(T)$ in $\mathsf{PS}_{\le 2}(T)+\mathsf{NU}$?
    \item
    Generally, in which circumstances can forcing-interpretations be replaced with interpretations?
    In the case of finitely axiomized sequential theories or reflexive sequential theories, there is
    an argument that this can be done. However, even for arbitrary sequential theories we do not know
    whether this is always possible. 
\setcounter{prob}{\value{enumi}}
\end{enumerate}

\bibliographystyle{alpha}
\bibliography{bibliography,provint}
\end{document}